\newcommand{\pres}[3]{\textnormal{#1} \langle #2 \mid #3 \rangle}
\newcommand{\lra}[1]{\xleftrightarrow{\ast}_{#1}}
\newcommand{\xra}[1]{\xrightarrow{}^\ast_{#1}}
\newcommand{\xr}[1]{\xrightarrow{}_{#1}}
\newcommand{\trev}{\text{rev}}
\newcommand{\Z}{\mathbb{Z}}
\newcommand{\ZM}{\mathbb{Z}M}
\newcommand{\FIS}{{\mathbf{F}_1^\star}}
\newcommand{\FrS}{\mathbf{F}_r^\star}
\newcommand{\cL}{\mathcal{L}}
\newcommand{\cR}{\mathcal{R}}
\newcommand{\cE}{\mathcal{E}}
\DeclareMathOperator{\Irr}{Irr}
\DeclareMathOperator{\FP}{FP}
\DeclareMathOperator{\Ext}{Ext}
\DeclareMathOperator{\Hom}{Hom}
\DeclareMathOperator{\Tor}{Tor}
\DeclareMathOperator{\FSX}{\mathbf{F}^\star}
\DeclareMathOperator{\im}{im}
\DeclareMathOperator{\FIM}{FIM}
\newcommand{\N}{{\mathbb{N}}}
\providecommand{\customgenericname}{}
\newcommand{\newcustomtheorem}[2]{%
  \newenvironment{#1}[1]
  {%
   \renewcommand\customgenericname{#2}%
   \renewcommand\theinnercustomgeneric{##1}%
   \innercustomgeneric
  }
  {\endinnercustomgeneric}
}
\newtheorem{theorem}{Theorem} 
\newtheorem*{theorem*}{Theorem} 
\newtheorem{lemma}{Lemma}     
\newtheorem{corollary}{Corollary}
\newtheorem{proposition}{Proposition}
\numberwithin{lemma}{section}
\numberwithin{corollary}{section}
\numberwithin{proposition}{section}
\newtheorem*{mainlemma*}{Main Lemma}
\theoremstyle{definition}
\newtheorem{question}{Question}
\newtheorem*{question*}{Question}
\numberwithin{example}{section}
\newtheorem{remark}{Remark}
\begin{document}

\title[]{On the growth and integral (co)homology of free regular star-monoids}

\author{\small{Carl-Fredrik Nyberg-Brodda}}
\address{{School of Mathematics, Korea Institute for Advanced Study (KIAS), Seoul 02455, Korea}}
\email{cfnb@kias.re.kr}
\thanks{The author is supported by the Mid-Career Researcher Program (RS-2023-00278510) through the National Research Foundation funded by the government of Korea, and by the KIAS Individual Grant MG094701 at Korea Institute for Advanced Study}

\subjclass[2020]{}

\date{\today}


\begin{abstract}
The free regular $\star$-monoid of rank $r$ is the freest $r$-generated regular monoid $\FrS$ in which every element $m$ has a distinguished pseudo-inverse $m^\star$ satisfying $mm^\star m = m$ and $(m^\star)^\star = m$. We study the growth rate of the monogenic regular $\star$-monoid $\FIS$, and prove that this growth rate is intermediate. In particular, we deduce that $\FrS$ is not rational or automatic for any $r \geq 1$, yielding the analogue of a result of Cutting~\& ~Solomon for free inverse monoids. Next, for all ranks $r \geq 1$ we determine the integral homology groups $H_\ast(\FrS, \Z)$, and by constructing a collapsing scheme prove that they vanish in dimension $3$ and above. As a corollary, we deduce that the free regular $\star$-monoid $\FrS$ of rank $r \geq 1$ does not have the homological finiteness property $\FP_2$, yielding the analogue of a result of Gray \& Steinberg for free inverse monoids. 
\end{abstract}

\maketitle

\noindent Regular $\star$-monoids were first studied by Nordahl \& Scheiblich \cite{Nordahl1978}, and have recently resurfaced in the study of generalizations of diagram monoids \cite{East2024, EastGray2024}. In particular, in \cite{EastGray2024} the notion of \textit{free projection-generated} regular $\star$-monoids was studied. A natural object that arises in this context is the (absolutely) free regular $\star$-monoid $\FSX(X)$ on a set, which is the main object of study in this article. In particular, we will study its rate of growth and its homological properties. The monogenic free regular $\star$-monoid, which we denote $\FIS$, has some peculiar properties. for example, it is orthodox \cite{Yamada1983}, though it is not inverse. The first main result of this article is to show that $\FIS$ has intermediate growth (Theorem~\ref{Thm:main-thm-growth}). We prove this by connecting the problem of determining its growth to a known counting problem, which is connected to rather deep number-theoretic problems. In \S\ref{Sec:homology-of-FRS} we prove our second main result (Theorem~\ref{Thm:main-homology-theorem}) and compute all integral homology groups $H_k(\FrS, \Z)$ of $\FrS$ for all ranks $r \geq 1$. In particular, we prove that the trivial cohomological dimension of $\FrS$ is $2$, and that $\FrS$ does not have the homological finiteness property $\FP_2$. 

\section{Preliminaries}

\subsection{Regular $\star$-monoids}

A regular monoid $M$ is one in which every element $m \in M$ has a pseudo-inverse $m'$, i.e.\ an element satisfying $mm'm = m$. Free regular monoids do not exist (cf.\ \cite[Corollary~3.2]{McAlister1968}), but if one augments the signature one can construct the variety of \textit{regular $\star$-monoids}, which has free objects. These will be the main focus of this article; we now construct them. A $\star$-monoid is a monoid $M$ equipped with an involution $\star \colon M \to M$ such that $(xy)^\star = y^\star x^\star$. All groups and inverse monoids are, of course, $\star$-monoids. Between the class of all $\star$-monoids and that of all inverse monoids lies the class of \textit{regular $\star$-monoids}, being those $\star$-monoids for which $m^\star$ is a pseudo-inverse of $m$, i.e.\ $mm^\star m = m$. Notice that from this relation it also follows that $m$ is a pseudo-inverse of $m^\star$, as $m^\star m m^\star = ((m^\star m m^\star)^\star)^\star = (m m^\star m)^\star = m^\star$. The variety of regular $\star$-monoids has free objects; thus \textit{free regular $\star$-monoids exist}.

Due to the combinatorial nature of this present article, we opt to give an equivalent definition of free regular $\star$-monoids via monoid presentations, by specifying generators and relations, rather than via universal algebra. Let $A$ be a finite set (an alphabet), and let $A^\ast$ denote the free monoid on $A$. The empty word will be denoted $1$. Let $A_\star$ denote a set with $A \cap A_\star = \varnothing$ and in involutive correspondence with $A$, associating $a$ with $a^\star$, and extend this correspondence to words such that $(xy)^\star = y^\star x^\star$ for all words $x, y \in A^\ast$. Then the free regular $\star$-monoid $\FSX(A)$ is the monoid with the presentation 
\begin{equation}\label{Eq:Presentation-of-freestar}
\FSX(A) = \pres{Mon}{A \cup A_\star}{ww^\star w = w, \: (\forall w \in (A \cup A_\star)^\ast)}.
\end{equation}
The monoid $\FSX(A)$ will be studied in this article, using \eqref{Eq:Presentation-of-freestar} as a definition; for a proof that $\FSX(A)$ is indeed a free regular $\star$-monoid, see \cite[Theorem~5]{Polak2001}. When $|A| = r$, we will write $\FrS$ for the $r$-generated free regular $\star$-monoid. Note that considered as a regular $\star$-monoid $\FrS$ is generated by $r$ elements, whereas considered as a monoid it is generated by $2r$ elements. 

\subsection{Growth}\label{Subsec:growth}

If $M$ is any monoid generated by a finite set $A$, then the \textit{length} of an element $m \in M$ is the least $k$ such that $m$ can be written as a word of length $k$ over $A$. The \textit{ball of radius $k$} is the set of all elements in $M$ with length exactly $k$. The \textit{growth function} of $M$ (with respect to $A$) is then the function $\gamma_A \colon \mathbb{N} \to \mathbb{N}$ which maps $k$ to the number of elements in the ball of radius $k$. The \textit{growth rate} of $M$ is defined as $\limsup_{n \to \infty} \gamma(n)^{\frac{1}{n}}$. This limit always exists, since the growth rate is submultiplicative; if the limit is greater than $1$, then $M$ is said to have \textit{exponential growth}. If there exist natural numbers $C, d \in \N$ such that $\gamma(n) \leq (Cn)^d$ for all $n \in \N$, then we say that $M$ has \textit{polynomial growth} (of degree $\leq d$). If $M$ has neither polynomial nor exponential growth, then we say that $M$ has \textit{intermediate growth}. For more on the basics of growth, we refer the reader to the survey by Grigorchuk \cite{Grigorchuk1991}.

\subsection{Rewriting systems}\label{Subsec:rewriting-homology}

A \textit{rewriting system} $\mathcal{R}$ on $A$ is a subset of $A^\ast \times A^\ast$, and its elements $(u, v)$ are called \textit{rules}, which we will write as $(u \longrightarrow v) \in \mathcal{R}$. The system $\mathcal{R}$ induces several relations on $A^\ast$. We write $u \xr{\mathcal{R}} v$ if there exist $x, y \in A^\ast$ and a rule $(\ell \longrightarrow r) \in \mathcal{R}$ such that $u \equiv x\ell y$ and $v \equiv xry$. Any word $u$ for which there exists some $v$ such that $u \xr{\mathcal{R}} v$ is called $(\mathcal{R}$-)\textit{reducible}; otherwise, $u$ is ($\mathcal{R}$-)\textit{irreducible}. Let $\xra{R}$ denote the reflexive and transitive closure of $\xr{R}$. The transitive, symmetric closure of $\xra{\mathcal{R}}$ is a congruence, denoted $\lra{\mathcal{R}}$. The monoid $A^\ast / \lra{\mathcal{R}}$ is called the monoid defined by $\mathcal{R}$, and is clearly just the monoid generated by $A$ and subject to the defining relations $u = v$ for all $(u, v) \in \mathcal{R}$. If there is no infinite chain $u_0 \xr{\mathcal{R}} u_1 \xr{\mathcal{R}} \cdots$, then we say $\mathcal{R}$ is \textit{terminating}. We say that $\mathcal{R}$ is \textit{confluent} if for all words $u, v_1, v_2 \in A^\ast$ such that $u \xra{\mathcal{R}} v_1$ and $u \xra{\mathcal{R}} v_2$, there exists some word $v_3$ such that $v_1, v_2 \xra{\mathcal{R}} v_3$. A confluent and terminating system is said to be \textit{complete}. Clearly, in a complete system any word $u$ can be rewritten to a unique normal form: the unique irreducible word $v$ such that $u \xra{\mathcal{R}} v$. For more details on string rewriting, we refer the reader to \cite{Jantzen1988}. 

Any monoid presentation is a string rewriting system, although the resulting system is by no means guaranteed to be complete. Starting with the presentation \eqref{Eq:Presentation-of-freestar}, we obtain the rewriting system
\begin{equation}\label{Eq:free-star-rewriting}
\mathbf{R}^\star(A) = \{ ww^\star w \longrightarrow w, \: (\forall w \in (A \cup A_\star)^\ast) \}
\end{equation}
By analogy with the notation $\FrS$, we will write $\mathbf{R}^\star_r$ for $\mathbf{R}^\star(A)$ if $|A| = r$ and the alphabet is implictly understood. The rewriting system \eqref{Eq:free-star-rewriting} is infinite, but it is, of course, computable; and one of the fundamental results that serve as a starting point for this article is the fact that the system yields a normal form for elements of $\FSX(A)$. 

\begin{proposition}[Pol\'ak, 2001]\label{Prop:Polak-prop}
The rewriting system $\mathbf{R}^\star(A)$ is complete.
\end{proposition}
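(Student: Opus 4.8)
The plan is to verify the two halves of completeness in turn: termination first, then local confluence.

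Termination is immediate from a length argument. For any nonempty $w \in (A \cup A_\star)^\ast$ the rule $ww^\star w \longrightarrow w$ strictly decreases length, since $|ww^\star w| = 3|w| > |w|$; the single rule with $w = 1$ is the trivial rule $1 \longrightarrow 1$, which can never be applied to change a word. Thus every $\mathbf{R}^\star(A)$-reduction strictly decreases word length, there can be no infinite reduction chain, and $\mathbf{R}^\star(A)$ is terminating.

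Since the system is terminating, by Newman's Lemma (see, e.g., \cite{Jantzen1988}) it suffices to establish local confluence, and I would do this through a critical-pair analysis. Write $L_w := ww^\star w$ for the left-hand side associated to $w$, and observe that the family of left-hand sides is closed under $\star$, since $L_w^\star = w^\star w w^\star = L_{w^\star}$; this symmetry roughly halves the casework. Suppose a word $u$ admits two distinct one-step reductions, applying $L_w \longrightarrow w$ and $L_v \longrightarrow v$ at two occurrences of left-hand sides. If the occurrences are disjoint the reductions commute and the two results are trivially joinable, so all the content lies in the overlapping occurrences. These split into: (i) self-overlaps, where the two occurrences come from the same rule, the prototype being $u = ww^\star w w^\star w$, in which reducing either the prefix or the suffix copy of $L_w$ yields $ww^\star w$ on both sides, so the pair resolves at once; (ii) inclusions, where $L_v$ is a proper factor of $L_w$; and (iii) genuine partial overlaps, where a proper suffix of $L_w$ coincides with a proper prefix of $L_v$.

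The main obstacle is case (iii), and to a lesser extent (ii): because the rule set is infinite, one cannot simply enumerate finitely many critical pairs but must treat a parametric family uniformly. An overlap between $ww^\star w$ and $vv^\star v$ forces strong periodicity constraints on the overlapping region, which I would exploit via Fine--Wilf-type combinatorics on words. The added subtlety here, compared with an ordinary cube-rewriting system, is that the central block is the anti-involution $w^\star$ rather than a second copy of $w$, so the periodicity must be tracked through the order-reversing map $\star$. In each configuration the goal is to exhibit a common descendant of $v_1$ and $v_2$, typically obtained by reducing both words down toward the shortest block; I expect that once the overlap forces $w$ and $v$ to be determined by a common factor (in a manner compatible with $\star$), every critical pair closes. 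Checking that this closure is genuinely exhaustive, across all admissible overlap lengths and all interactions of $\star$ with the triple-repetition pattern, is where the real work of the proof lies.
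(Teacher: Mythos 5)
The paper does not prove this proposition at all --- it is imported as a known result of Pol\'ak (cited as \cite[Theorem~5]{Polak2001}), so the only question is whether your argument stands on its own. It does not: it is a proof plan, not a proof. Your termination half is fine (every rule with $w \neq 1$ strictly reduces length), and the reduction of confluence to local confluence via Newman's Lemma, plus the classification of overlaps into disjoint occurrences, self-overlaps, inclusions, and partial overlaps, is the correct skeleton. But the entire content of the proposition lies in resolving the overlap cases (ii) and (iii) for the infinite parametric family $\{ww^\star w \to w\}$, and there you only gesture at ``Fine--Wilf-type combinatorics'' and state ``I expect that \dots every critical pair closes.'' Expecting is not proving: an inclusion overlap ($vv^\star v$ a proper factor of $ww^\star w$) and a partial overlap (a suffix of $ww^\star w$ equal to a prefix of $vv^\star v$) each require an explicit common descendant, and the interaction of the periodicity constraints with the order-reversing involution $\star$ is precisely where a na\"ive argument can fail. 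As written, the proposal proves termination and then asserts the conclusion of the hard half.

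Two smaller points. First, your treatment of the $w=1$ rule is off: if $1 \to 1$ is genuinely admitted as a rule, then every word is reducible (the empty factor occurs everywhere) and the constant chain $u \to u \to \cdots$ violates termination as the paper defines it; the correct move is to discard the trivial rule outright, noting that it changes neither the congruence nor (after removal) the set of irreducible words. Second, even your ``prototype'' self-overlap case is too narrow: self-overlaps of $ww^\star w$ with itself are not limited to sharing the outer $w$-block, since a proper suffix of $ww^\star w$ may match a proper prefix of $ww^\star w$ in ways depending on the internal structure of $w$, so that case also needs the same parametric analysis you deferred. If you want a self-contained proof, you must either carry out this critical-pair analysis in full or follow Pol\'ak's original (algebraic) route; alternatively, for the rank-one case the paper's Lemma~\ref{Lem:aiAiai-is-equivalent-to-full} shows how one can pass to the much smaller equivalent system $\mathcal{R}_1$, whose completeness is elementary.
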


In the particular case of $r=1$, an even simpler rewriting system for $\FrS$ can be constructed. Indeed, let $A = \{ a \}$, and let
\begin{equation}\label{Eq:R1-definition}
\mathcal{R}_1 = \{ a^n (a^\star)^n a^n \longrightarrow a^n, \:\: (a^\star)^n a^n (a^\star)^n \longrightarrow (a^\star)^n \mid n \geq 1 \}.
\end{equation}
It is clear and easy to prove that $\mathcal{R}_1$ is complete. Furthermore, a simple combinatorial argument shows that $\mathcal{R}_1$ is, in fact, equivalent to $\mathbf{R}^\star_1$; we include the proof for completeness.

\begin{lemma}\label{Lem:aiAiai-is-equivalent-to-full}
The rewriting systems $\mathbf{R}^\star_1$ and $\mathcal{R}_1$ are equivalent. 
\end{lemma}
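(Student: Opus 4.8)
The plan is to show that the congruences generated by the two systems coincide. One inclusion is immediate: each rule of $\mathcal{R}_1$ is a special case of a rule of $\mathbf{R}^\star_1$, since $a^n(a^\star)^n a^n \longrightarrow a^n$ is $ww^\star w \longrightarrow w$ for $w = a^n$ (using $(a^n)^\star = (a^\star)^n$), and $(a^\star)^n a^n(a^\star)^n \longrightarrow (a^\star)^n$ is the same for $w = (a^\star)^n$; hence $\lra{\mathcal{R}_1}\,\subseteq\,\lra{\mathbf{R}^\star_1}$. For the reverse inclusion it suffices to verify that every defining relation of $\mathbf{R}^\star_1$ already holds modulo $\mathcal{R}_1$, that is, that $w w^\star w \lra{\mathcal{R}_1} w$ for all $w \in (A\cup A_\star)^\ast$. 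This is where all the work lies.

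First I would reduce to the case that $w$ is $\mathcal{R}_1$-irreducible. Since $\mathcal{R}_1$ is complete, $w \xra{\mathcal{R}_1}\widehat w$ for a unique normal form $\widehat w$; and since $\mathcal{R}_1$ is closed under $\star$ (the two families of rules are interchanged by the involution), one also has $w^\star \xra{\mathcal{R}_1}\widehat w^\star$. Thus $ww^\star w \xra{\mathcal{R}_1}\widehat w\,\widehat w^\star\widehat w$, so it is enough to prove $\widehat w\,\widehat w^\star\widehat w \xra{\mathcal{R}_1}\widehat w$. Writing $\widehat w = B_1\cdots B_k$ as its alternating decomposition into maximal blocks $B_i$ of exponent $n_i \ge 1$ (each a power of $a$ or of $a^\star$), the word $\widehat w\,\widehat w^\star\widehat w$ is again alternating, with exponent sequence $(n_1,\dots,n_k,\,n_k,\dots,n_1,\,n_1,\dots,n_k)$.

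The key observation is that an interior block whose two neighbours both have exponent at least its own is reducible, because its flanks then provide a factor $a^{n_i}(a^\star)^{n_i}a^{n_i}$ or $(a^\star)^{n_i}a^{n_i}(a^\star)^{n_i}$. In particular a normal form has no interior weak local minimum in its exponent sequence, so $(n_1,\dots,n_k)$ is unimodal, and consequently either $n_1\le n_2$ or $n_{k-1}\ge n_k$. In the first case the doubled exponent $n_1$ at the central junction $\widehat w^\star \mid \widehat w$ is a weak local minimum, and one application of the corresponding rule collapses the three central blocks $B_2^\star B_1^\star B_1$ to $B_2^\star$, rewriting $\widehat w\,\widehat w^\star\widehat w$ to $B_1\,(v v^\star v)$ with $v = B_2\cdots B_k$; the second case is symmetric and rewrites it to $(u u^\star u)\,B_k$ with $u = B_1\cdots B_{k-1}$. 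As $v$ and $u$ are shorter normal forms, induction on $k$ gives $v v^\star v \xra{\mathcal{R}_1} v$ (resp. $u u^\star u \xra{\mathcal{R}_1} u$), and reattaching the peeled block recovers $\widehat w$; the base cases $k \le 1$ are single applications of the rules of $\mathcal{R}_1$.

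The main obstacle is precisely the block bookkeeping in this inductive step: one must confirm the unimodality dichotomy $n_1\le n_2$ or $n_{k-1}\ge n_k$, check that the relevant central junction genuinely exhibits a left-hand side of $\mathcal{R}_1$ (this is where the exponents must be compared), and verify that after the collapse the word is literally $B_1 v v^\star v$ (resp. $u u^\star u B_k$) with no accidental merging of adjacent blocks, so that the inductive hypothesis applies verbatim to the strictly shorter normal form. Once this is in place, the two inclusions combine to give $\lra{\mathbf{R}^\star_1} = \lra{\mathcal{R}_1}$, which is the asserted equivalence.
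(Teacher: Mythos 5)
Your proof is correct and follows essentially the same route as the paper's: the easy inclusion $\mathcal{R}_1 \subseteq \mathbf{R}^\star_1$, then for the converse an analysis of the alternating block-exponent structure of $\mathcal{R}_1$-irreducible words (no interior weak local minimum, hence unimodality), a single rule application at a junction of $ww^\star w$ that peels off an extremal block, and induction on the resulting shorter irreducible word. The paper organizes this slightly differently---it states the structure as a factorization $w_0 w_1 w_2$ with strictly increasing/peak/strictly decreasing exponents and inducts on word length rather than block count---but these differences are cosmetic; your write-up is, if anything, more explicit about the reduction to the irreducible case and about what the word reduces to after the collapse.
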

\begin{proof}
Obviously, $\mathcal{R}_1 \subseteq \mathbf{R}^\star_1$, so it suffices to show that any left-hand side in $\mathbf{R}^\star_1$ can be reduced to its corresponding right-hand side under $\mathcal{R}_1$. For ease of notation, let $A$ denote $a^\star$, and let $w \in \{ a, A \}^\ast$ be any $\mathcal{R}_1$-irreducible word. Let Then $w$ does not contain any subword of the form $a^n A^n a^n$ or $A^n a^n A^n$ for any $n \geq 1$, and can therefore be uniquely written as a word $w_0 w_1 w_2$, where, when written as alternating words over $a, A$, the exponents of the terms in $w_0$ are strictly increasing; the exponents of the terms in $w_2$ are strictly decreasing, and $w_1 \in \{ a^m, A^m, a^m A^m \}$ with $m$ larger than any exponent in $w_0$ and $w_2$. In particular, either $w = w_1$, in which case it is easy to see that $ww^\star w$ is $\mathcal{R}_1$-reducible, and the claim would follow; or else $w$ begins with $x^i (x^\star)^j$ with $i<j$ and $x$ is a letter; or else $w$ ends with $x^i (x^\star)^j$ with $i>j$ and $x$ is a letter. In the second case, we have that $w^\star w$ contains $x^j (x^\star)^i x^i (x^\star)^j$, and since $i < j$ this word contains $(x^\star)^i x^i (x^\star)^i$. Hence, $ww^\star w$ is $\mathcal{R}_1$-reducible in this case. In the last case, an analogous argument on $ww^\star$ also yields that $ww^\star w$ is $\mathcal{R}_1$-reducible. Thus, in all cases we are done by induction on the length of $w$. 
\end{proof}

Thus, $\mathcal{R}_1$ is a complete rewriting system which defines $\FIS$. The rest of the article will use this rewriting system to derive properties about $\FIS$.

\subsection{Monoid homology and the Kobayashi resolution}\label{Subsec:kobayashi}

Let $M$ be any monoid. Given any $\ZM$-module $A$ and a projective resolution $(P_\ast, \partial_\ast)$ of $\Z$ as a trivial right $\ZM$-module, we define the \textit{right homology} of $M$ with coefficients in $A$ as $H_\ast^{(r)}(M, A) = \Tor_\ast^{\ZM}(\Z, A)$, and the \textit{right cohomology} of $M$ with coefficients in $A$ as $H^\ast_{(r)}(M, A) = \Ext^\ast_{\ZM}(\Z, A)$. We analogously define $H_\ast^{(l)}(M, A)$ and $H^\ast_{(l)}(M, A)$ as the \textit{left} homology (resp.\ cohomology) groups. In general, the left and right versions of these groups are independent of one another (cf.\ e.g.\ \cite{GubaPride1998}), but for monoids equipped with an anti-isomorphism ${}^\circ \colon M \to M$, i.e.\ such that such that $(mn)^\circ = n^\circ m^\circ$ and $(m^\circ)^\circ = m$, then the left and right (co)homology groups can be shown to be identical; this is, for example, the case when $M$ is a group, an inverse monoid, or a regular $\star$-monoid. In these cases, we can simply speak of the \textit{(co)homology groups} of $M$ with coefficients in $A$. Furthermore, we say that $M$ is type (left) \textit{$\FP_n$} if there is a projective $\ZM$-resolution $(P_\ast, \partial_\ast)$ of $\Z$ (as a left $\ZM$-module) in which $P_i$ is finitely generated for all $1 \leq i \leq n$. Analogously, we may define \textit{right} $\FP_n$, which is again equivalent to left $\FP_n$ when $M$ is equipped with an anti-isomorphism as above. Thus, for the remainder of the article we will only discuss the properties $\FP_n$. If $M$ is of type $\FP_n$ for all $n \geq 1$, then we say it is type $\FP_\infty$. 

From a complete rewriting system $\mathcal{R}$ defining a monoid $M$, Kobayashi \cite{Kobayashi1990} constructed a free $\Z M$-resolution $(P_\ast, \partial_\ast)$ of $\Z$ with the property that if $\mathcal{R}$ is finite, then every $P_i$ is finitely generated; thereby proving the remarkable result that any monoid admitting a finite complete rewriting system is $\FP_\infty$ (see also \cite{Anick1986, Groves1990}). We will follow the somewhat simplified notation of Guba \& Pride \cite{GubaPride1996} for the Kobayashi resolution. We will only construct a resolution of $\Z$ as a \textit{right} $\ZM$-module; the interested reader may find the analogous left resolution by reading this article in a mirror.

Let $A$ be an alphabet, and $\cR \subseteq A^\ast \times A^\ast$ be a complete rewriting system presenting a monoid $M$. Let $\Irr(\cR)$ be the set of all $\cR$-irreducible words (henceforth simply irreducible words), and let $\Irr^+(\cR) = \Irr(\cR) \setminus \{ 1 \}$. We assume without loss of generality that no element of $A$ is a left-hand side in $\cR$. The irreducible  form of a word $w \in A^\ast$ will be denoted by $\overline{w}$. Since $\cR$ is complete, we identify $\Irr(\cR)$ with $M$, and speak of the elements of $M$ as irreducible words. Let $\cE$ be the set of all pairs of words $(u, v)$ with $u, v \in \Irr^+(\cR)$, such that $uv$ is \textbf{not} irreducible but all of its proper prefixes are irreducible. For $n \geq 1$, let $V^{(n)}$ be the set of all sequences of the form $(v_1, v_2, \dots, v_n)$ such that $v_1 \in A$, and for all $1 \leq i < n$ we have $(v_i, v_{i+1}) \in \cE$. Let $P_n$ be the free right $\ZM$-module on the basis $V^{(n)}$. Setting $P_{-1} = \Z$ and $P_0 = \ZM$, we will now define chain maps $\partial_\ast$ such that $(P_\ast, \partial_\ast)$ is a free resolution, which we will call the \textit{Kobayashi resolution} associated to $\cR$. We remark that if $\cR$ is finite then each $P_i$ is finitely generated. 

We construct the chain maps $\partial_\ast$ inductively, at the same time as defining a $\Z$-homomorphism $i_\ast$ such that it is a chain homotopy, i.e.\ $\partial i  + i \partial = \operatorname{id}$, thereby ensuring $(P_\ast, \partial_\ast)$ is indeed a resolution. We set $\partial_0 = \varepsilon$, the augmentation map, and let $i_0 \colon \Z \to \ZM$ be defined by $i_0(1) = 1$. Let $\partial_1 \colon P_1 \to \ZM$ be defined by $\partial_1(a) = a-1$ for $a \in A = V^{(1)}$, and extended $\ZM$-linearly. We define $i_1 \colon \ZM \to P_1$ as follows. Let $x \in \Irr(\cR)$, and write $x = x_1 x_2 \cdots x_k$ with $x_i \in A$. Then we set $i_1(x) = \sum_{i=1}^k (x_i) \circ x_{i+1} \cdots x_k$ where the notation $\circ$ is used to indicate the right action of $\ZM$ on $P_1$ (and, more generally, $P_i$). In particular, we set $i_1(1) = 0$. 

We begin the inductive definition of the remaining $i_\ast$ and $\partial_\ast$. First, to simplify notation, we will write $\partial_n( (v_1, \dots, v_n) \circ 1)$ as $\partial_n(v_1, \dots, v_n)$; by $\ZM$-linearity of the constructed map this notation will be sufficient. For $n \geq 1$, we set
\begin{equation}\label{Eq:partial_n-definition}
\partial_{n+1}(v_1, \dots, v_n, v_{n+1}) = (v_1, \dots, v_n) \circ v_{n+1} - i_n \left( \partial_n (v_1, \dots, v_n) \circ v_{n+1} \right).
\end{equation}
To define $i_{n+1}$, notice that since $P_n$ is free as a $\Z$-module on the set of elements $(v_1, \dots, v_n) \circ x$, where $(v_1, \dots, v_n) \in V^{(n)}$ and $x \in \Irr(\cR)$, we only need to define $i_{n+1}$ on this set. Fix such an element. If $v_nx$ is irreducible, then we set $i_{n+1}( (v_1, \dots, v_n) \circ x) = 0$. Otherwise, there is some minimal prefix $v_n v_{n+1}$ of $v_n x$ which is reducible, where $v_{n+1}$ is non-empty; write $x = v_{n+1} y$. Then $(v_n, v_{n+1}) \in \cE$, and by \cite[p.~266]{Kobayashi1990} the element $i_n \left( \partial_n( v_1, \dots, v_n) \circ v_{n+1} \right)$ can be written as a sum of basis elements of the form $(w_1, \dots, w_n) \circ z$, for all of which $w_1 \cdots w_n z$ is shorter than $v_1 \cdots v_n v_{n+1}$. In particular, $i_{n+1}$ is defined on such basis elements by induction (on word length), and hence also on $i_n \left( \partial_n ( (v_1, \dots, v_n) \circ v_{n+1} )  \circ y \right)$. We then define 
\begin{equation}\label{Eq:i-n-definition}
i_{n+1}( (v_1, \dots, v_n) \circ x) = (v_1, \dots, v_n, v_{n+1}) \circ y + i_{n+1} \left( i_n \left( \partial_{n}(v_1, \dots, v_n) \circ v_{n+1} \right) \circ y \right)
\end{equation}
which completes the inductive definition of $i_\ast$ and $\partial_\ast$. It can be shown that $i_\ast$ is a chain homotopy, and hence the Kobayashi resolution is acyclic. 

\subsection{Homology and collapsing schemes}\label{Subsec:collapsing-schemes}

The complexes built using Kobayashi's method are often quite unwieldy; to simplify computing their homology, we will use \textit{collapsing schemes} in the sense of Brown \cite{Brown1992} and of Forman's discrete Morse theory \cite{Forman1998}. We will follow, in part, the exposition of this theory given by Farley \cite{Farley2005}. For any commutative unital ring $R$, let $(C_\ast, \partial_\ast)$ be a chain complex of free $R$-modules. Fix a basis $B_n$ for each $C_n$, and let $\overline{B}_n = - B_n \cup B_n \cup \{ 0 \}$. A map $\theta \colon B_n \to \overline{B}_{n+1}$ is said to be a \textit{collapsing scheme} (or \textit{Morse matching}) if:
\begin{enumerate}
\item $\theta^2 = 0$, 
\item for all $b, b' \in B_n$, if $\theta(b) = \pm \theta(b') \neq 0$, then $b = b'$,
\item for all $b \in B_n$, if $\theta(b) \neq 0$ then $b$ occurs in $(\partial \theta)(b)$ with the coefficient $-1$;
\item for all distinct $b, b' \in B_n$, we write $b \succ b'$ if $b'$ occurs in $(\partial \theta)(b)$ or $(\theta \partial)(b')$ with a non-zero coefficient; letting $\succ$ also denote its transitive closure, we require that the poset $(B_n, \succ)$ be locally finite and that $\succ$ is well-founded.
\end{enumerate}

If $b \in B_n$, then we say that $b$ is
\begin{itemize}
\item \textit{critical}, if $\theta(b) = 0$ and $b \not\in \im \theta$; 
\item \textit{collapsible}, if $b \in \im \theta$;
\item \textit{redundant}, if $\theta(b) \neq 0$. 
\end{itemize}
Clearly, a basis element must lie in exactly one of the above three classes, and we thus have a partition of each $B_n$. The goal of any collapsing scheme is to minimize the number of critical cells; the fewer, the simpler all homological computations for $(C_\ast, \partial_\ast)$ will become, by virtue of the central theorem of collapsing scheme. To state it, let 
\[
\Pi_{\theta} := 1 + \partial \theta + \theta \partial.
\]
Then it can be shown that $\Pi_\theta \colon C_\ast \to C_\ast$ is homotopic to $\operatorname{id}$, and furthermore that for any $c \in C_\ast$, there exist some $n \in \N$ such that $\Pi_\theta^n(c) = \Pi_\theta^{n+1}(c)$ (see \cite[Proposition~16]{Farley2005}). This induces a well-defined map $\Pi_\theta^\infty \colon C_\ast \to C_\ast$. Let $I_\theta(C_\ast) = \Pi_\theta^\infty(C_\ast)$ be the $R$-module of $\Pi_\theta$-invariant chains in $C_\ast$. This module can be described explicitly; indeed, let $C^\theta_n$ be the free $R$-submodule of $C_n$ generated by the critical basis elements, and let $\pi_\theta \colon C_n \to C^\theta_n$ denote the projection. Then one can prove that $I_\theta(C_\ast) \cong C^\theta_n$ under $\Pi_\theta^\infty$ (see \cite[Proposition~17]{Farley2005}). If we identify $I_\theta(C_\ast)$ and $C^\theta_\ast$ using this isomorphism, then we can define a new chain map $\partial^\theta_\ast \colon C^\theta_\ast \to C^\theta_\ast$ by $\partial^\theta_\ast = \pi^\theta \circ \partial \circ \Pi^\infty_\theta$. The complex $(C_\ast^\theta, \partial^\theta_\ast)$ is called the \textit{Morse complex} associated with $\theta$. 

\begin{proposition}[Forman \cite{Forman1998}]\label{Prop:Forman'sTheorem}
There is an isomorphism $H_\ast(C^\theta_\ast, \partial^\theta_\ast) \cong H_\ast(C_\ast, \partial_\ast)$. 
\end{proposition}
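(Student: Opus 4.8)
The plan is to exhibit an explicit chain homotopy equivalence between $(C_\ast, \partial_\ast)$ and the Morse complex $(C^\theta_\ast, \partial^\theta_\ast)$, realised through the operator $\Pi_\theta^\infty$ together with the identification $I_\theta(C_\ast) \cong C^\theta_\ast$ recorded just before the statement. First I would verify that $\Pi_\theta$ is a chain map: since $\partial^2 = 0$ one computes $\partial \Pi_\theta = \partial(1 + \partial\theta + \theta\partial) = \partial + \partial\theta\partial = (1 + \partial\theta + \theta\partial)\partial = \Pi_\theta \partial$, so $\partial$ commutes with $\Pi_\theta$, hence with every power $\Pi_\theta^n$ and, by the pointwise stabilisation of \cite[Proposition~16]{Farley2005}, with $\Pi_\theta^\infty$. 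Consequently $I_\theta(C_\ast) = \Pi_\theta^\infty(C_\ast)$ is a subcomplex of $C_\ast$, since $\partial(I_\theta(C_\ast)) = \partial \Pi_\theta^\infty(C_\ast) = \Pi_\theta^\infty \partial(C_\ast) \subseteq I_\theta(C_\ast)$, and thus carries the restricted differential $\partial|_{I_\theta}$.

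Next I would show that the inclusion $\iota \colon I_\theta(C_\ast) \hookrightarrow C_\ast$ and the map $\Pi_\theta^\infty \colon C_\ast \to I_\theta(C_\ast)$ are mutually inverse chain homotopy equivalences. One direction is immediate: invariant chains are fixed by $\Pi_\theta^\infty$, so $\Pi_\theta^\infty \circ \iota = \operatorname{id}$ on $I_\theta(C_\ast)$. For the other direction, the identity $\Pi_\theta - \operatorname{id} = \partial\theta + \theta\partial$ exhibits $\theta$ as a homotopy from $\operatorname{id}$ to $\Pi_\theta$; writing $S_n = \sum_{k=0}^{n-1}\Pi_\theta^k$, which is a chain map as a sum of chain maps, one gets $\Pi_\theta^n - \operatorname{id} = (\partial\theta + \theta\partial)S_n = \partial(\theta S_n) + (\theta S_n)\partial$, so $\theta S_n$ is a homotopy from $\operatorname{id}$ to $\Pi_\theta^n$. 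Using conditions (1)--(4) of the collapsing scheme — in particular the local finiteness and well-foundedness of $\succ$ — these homotopies stabilise to a homotopy $h$ satisfying $\iota \circ \Pi_\theta^\infty - \operatorname{id} = \partial h + h\partial$. Hence $C_\ast$ and the subcomplex $I_\theta(C_\ast)$ have isomorphic homology.

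Finally I would transport the differential through the isomorphism $\Pi_\theta^\infty \colon C^\theta_\ast \to I_\theta(C_\ast)$ of \cite[Proposition~17]{Farley2005}, whose inverse is the projection $\pi_\theta$. For a critical chain $c \in C^\theta_\ast$ the transported differential is $\pi_\theta(\partial(\Pi_\theta^\infty(c))) = (\pi_\theta \circ \partial \circ \Pi_\theta^\infty)(c) = \partial^\theta_\ast(c)$, which matches the stated Morse differential exactly. Therefore $(C^\theta_\ast, \partial^\theta_\ast)$ is isomorphic as a chain complex to $(I_\theta(C_\ast), \partial|_{I_\theta})$, and the claimed isomorphism $H_\ast(C^\theta_\ast, \partial^\theta_\ast) \cong H_\ast(C_\ast, \partial_\ast)$ follows by combining this with the homotopy equivalence of the previous step.

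I expect the main obstacle to be the stabilisation step: confirming that the pointwise-finite powers $\Pi_\theta^n$ assemble into a well-defined $R$-linear chain map $\Pi_\theta^\infty$, and that the homotopies $\theta S_n$ converge to a genuine chain homotopy rather than merely a formal infinite sum. This is precisely where hypotheses (3) and (4) are indispensable: the coefficient $-1$ condition and the locally finite, well-founded order $\succ$ together guarantee that each application of $\partial\theta + \theta\partial$ strictly decreases basis elements in $\succ$, so the process terminates on every chain. Since \cite[Propositions~16 and~17]{Farley2005} already package this convergence, the remaining work is the bookkeeping verifying that $\Pi_\theta^\infty$ is a chain homotopy equivalence onto $I_\theta(C_\ast)$ and that the transported differential coincides with $\partial^\theta_\ast$.
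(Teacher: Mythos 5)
Your proposal is correct and takes essentially the same route as the paper, which gives no independent argument but cites Forman and the algebraic proof in \cite[Theorem~18]{Farley2005}, sketching only the key point that one shows by induction on $\succ$ that $\Pi_\theta^\infty$ annihilates collapsible and redundant cells. Your explicit homotopy-equivalence argument via $\Pi_\theta^\infty$ and the stabilised homotopies $\theta S_n$ is precisely that cited proof; the one step you leave implicit --- that $\theta S_n$ stabilises because $\theta$ vanishes on $\Pi_\theta$-invariant chains, i.e.\ no redundant basis element survives in the support of an invariant chain (a short maximality argument using condition (3) and the well-foundedness of $\succ$) --- is exactly the induction the paper alludes to, so the gap is minor and fillable with the tools you already invoke.
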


Note, for example, that if $b \in C_\ast$ is collapsible and minimal under $\succ$, then $\Pi_\theta(b) = 0$, and in general one proves the above theorem by induction on $\succ$ that any collapsible or redundant cell is mapped to $0$ by $\Pi_\theta^\infty$. See also \cite{Brown1992} for a geometric version, and \cite[Theorem~18]{Farley2005} for a lucid algebraic proof.

\section{Growth of the free monogenic regular $\star$-monoid}

\noindent In this section, we will prove that the monogenic regular $\star$-monoid $\FIS$ has intermediate growth. To do this, we will use the rewriting system $\mathcal{R}_1$ as in \eqref{Eq:R1-definition}, and in particular determine the growth type of the language of irreducible elements modulo $\mathcal{R}_1$. Since the system $\mathcal{R}_1$ is length-reducing, each such irreducible element is the shortest representative in its equivalence class, and hence the growth rate of this language is precisely the growth rate of $\FIS$ (with respect to the generating set $\{ a, a^\star \}$. 

To ease notation, we will consider binary strings over $\{ 0, 1 \}$, with $a$ corresponding to $0$ and $a^\star$ to $1$. Consider the following two languages: 
\begin{align*}
\cL_\trev &= \{ w w^\trev w \mid w \in \{ 0, 1\}^\ast \}, \\
\cL_\star &= \{ w w^\star w \mid w \in \{ 0, 1\}^\ast \},
\end{align*}
where ${}^{\star}$ denotes bitwise negation followed by reversal (as in $\FIS$). Thus, for example,
\[
(011)(110)(011) \in \cL_\trev, \quad \text{and} \quad (011)(001)(011) \in \cL_\star,
\]
but neither of the two words are elements of the other set. Let $\cL_\trev^\complement(n)$ resp.\ $\cL_\star^\complement(n)$ denote the set of all binary strings of length at most $n$ not containing any element of $\cL_\trev$ resp.\ $\cL_\star$. Thus, for example, $\cL_\trev^\complement(n) = \{ 0, 1\}^{\leq n} \setminus \{ 0, 1\}^\ast \cL_\trev(n) \{ 0, 1\}^\ast$, where $\{ 0, 1\}^{\leq n}$ denotes the set of all binary strings of length at most $n$.

The language $\cL_\trev$ has been studied in the literature (and variants of it, e.g.\ replacing $ww^\trev w$ by $www^\trev$ in the definition \cite{CurrieRampersad2016}), and a good deal of information about the rate of growth of its complement has been obtained. 

\begin{proposition}[Currie \& Rampersad \cite{CurrieRampersad2015}]\label{Prop:growth-of-c-trev}
The growth rate of $\cL_\trev^\complement(n)$ is intermediate.
\end{proposition}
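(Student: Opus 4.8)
Write $g(n) = |\cL_\trev^\complement(n)|$, the quantity whose growth rate is in question. Since intermediate growth means neither polynomial nor exponential, the plan is to establish two separate bounds: first, that $g(n) = \exp(o(n))$, so that $\limsup g(n)^{1/n} = 1$ and the growth is subexponential; and second, that $g(n)$ eventually exceeds every polynomial, so that the growth is superpolynomial. The route I would take is to replace the raw pattern-avoidance condition by an exact, more transparent combinatorial description, from which both bounds can be read off using partition asymptotics.

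The first step is to analyse the structure of a binary word avoiding every factor $ww^\trev w$. Taking $w$ to be a single letter already forbids the factors $000$ and $111$, so every maximal run of equal letters has length $1$ or $2$; I would encode a word by its first letter together with its run-length sequence in $\{1,2\}^\ast$. The purpose of this encoding is that the remaining, longer and sparser constraints $ww^\trev w$ with $|w| \ge 2$ translate into monotonicity- and unimodality-type restrictions on an associated integer sequence. I would then aim to set up a length-respecting bijection between the avoiding words of length $n$ and a family of unimodal-type (partition-like) sequences of weight $\Theta(n)$, so that counting avoiding words becomes a partition-counting problem. Verifying that this correspondence captures avoidance of the \emph{entire} family $\{ww^\trev w\}$ in both directions — in particular the long-range reversal constraints — is the delicate structural part.

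With such a bijection in hand the two bounds follow from classical partition estimates. For the superpolynomial lower bound I would exhibit, for each target length $n$, at least $\exp(c\sqrt{n})$ distinct admissible sequences, for instance by realizing an arbitrary partition of an integer $m = \Theta(n)$ inside a fixed unimodal scaffold and invoking $p(m) = \exp(\Theta(\sqrt{m}))$; since $\sqrt{n} \gg \log n$, this already beats every polynomial. For the subexponential upper bound I would bound the number of admissible weight-$n$ sequences by a partition-type count of an integer of size $O(n)$, which is $\exp(O(\sqrt{n})) = \exp(o(n))$; hence $g(n)^{1/n} \to 1$. Combining the two yields that the growth is intermediate.

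The main obstacle is the middle step: proving that avoidance of the full infinite family $\{ww^\trev w : w \in \{0,1\}^\ast\}$ is captured \emph{precisely} by the unimodality constraint. The short patterns alone force only bounded runs, and this by itself still permits exponentially many words — the run-length sequences would range freely over all of $\{1,2\}^\ast$ — so the entire reduction in growth rate from exponential down to $\exp(\Theta(\sqrt{n}))$ is forced by the long patterns; extracting a clean, bijective combinatorial invariant from these long reversal constraints is where the real work lies. I note that the precise constant in the $\sqrt{n}$-exponent, together with the finer asymptotics, is governed by partial-theta and mock-modular identities — this is the ``deep number-theoretic'' content alluded to above — but none of that is needed for the coarse classification into intermediate growth, for which the elementary two-sided estimate $\exp(\Theta(\sqrt{n}))$ already suffices.
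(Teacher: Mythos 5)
Your overall strategy is the right one, and it is essentially the route behind the result as cited in the paper: Currie \& Rampersad also proceed by identifying the words avoiding $\{ww^\trev w\}$ with (strongly) unimodal integer sequences and then invoking partition-type asymptotics, and your observation that the coarse classification ``intermediate'' needs only the elementary two-sided bound $\exp(\Theta(\sqrt{n}))$ — not Rhoades' mock-theta asymptotics for $\widetilde{u}(n)$ — is correct. Your endgame is also sound: partitions into distinct parts arranged increasingly embed into unimodal sequences (superpolynomial lower bound), and a strongly unimodal sequence is determined by a pair of distinct-part partitions plus bounded data (subexponential upper bound).

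However, there is a genuine gap, and it sits exactly where you say ``the real work lies'': the structural reduction from pattern-avoidance to unimodality is announced but never carried out, and nothing else in the argument can start without it. As you yourself note, the short patterns $000$, $111$ only force run lengths in $\{1,2\}$ and still permit $2^{\Theta(n)}$ words, so the entire collapse to $\exp(\Theta(\sqrt{n}))$ is the content of the missing step; moreover your formulation of that step stays vague (``an associated integer sequence'' with ``unimodality-type restrictions'' — which sequence, and which restriction, is never pinned down; it is not the run-length sequence itself, but rather the gaps between doubled letters, equivalently the block exponents after XOR with $(01)^\infty$). Concretely, what is missing is the content supplied in the paper by Lemma~\ref{Lem:aiAiai-is-equivalent-to-full} (a word avoids $\{0^n1^n0^n,\,1^n0^n1^n\}$ for all $n$ exactly when its block-exponent sequence is strictly increasing, then strictly decreasing, with a controlled middle block) combined with Proposition~\ref{Prop:exists-bijection-between-excluding} (the length-preserving bijection $\Phi_\omega$ carrying $\cL_\star$-avoidance to $\cL_\trev$-avoidance); equivalently, it is Currie \& Rampersad's theorem that $\widetilde{u}(n) \le |\cL_\trev^\complement(n)| \le 2\widetilde{u}(n)$. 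Note also that you do not actually need an exact bijection for the coarse statement — an injection from distinct-part partitions into $\cL_\trev^\complement(n)$ and an injection from $\cL_\trev^\complement(n)$ into pairs of distinct-part partitions (up to polynomial multiplicity) would suffice — but neither injection is exhibited either, so as written the proposal is a plan for a proof rather than a proof.
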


We remark that the method of proof goes via connecting the growth of $\cL_\trev^\complement(n)$ with \textit{strongly unimodal sequences}, whose rate of growth $\widetilde{u}(n)$ is known to be intermediate by a result of Rhoades \cite{Rhoades2014}. Indeed, by passing via Ramanujan mock theta functions, Rhoades found the asymptotics
\[
\widetilde{u}(n) = \frac{\sqrt{3}}{(24n-1)^{3/4}} \exp \left( \frac{\pi}{6} \sqrt{24n-1}\right) \left( 1 + \frac{\pi^2-9}{4\pi (24n-1)^{1/2}} + O\left( \frac{1}{n} \right) \right).
\]
The authors of \cite{CurrieRampersad2015} prove that $\widetilde{u}(n) \leq |\cL_\trev^\complement(n)| \leq 2\widetilde{u}(n)$, which thus yields their result. As we shall see in Theorem~\ref{Thm:main-thm-growth}, these bounds also apply to the growth rate of $\FIS$. 

Unlike $\cL_\trev$, the language $\cL_\star$ and its complement have not, as far as we are aware, been studied in the literature. Its relevance to us comes from the following simple result.

\begin{lemma}\label{Lem:radius-is-just-L-1}
The ball of radius $n$ in $\FIS$ has size $|\cL_\star^\complement(n)|$. 
\end{lemma}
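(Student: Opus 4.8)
The plan is to set up a length-preserving bijection between the elements of $\FIS$ of length exactly $n$ and certain binary strings of length exactly $n$, and then show that the irreducible binary strings are precisely those in $\cL_\star^\complement$. First I would recall from Lemma~\ref{Lem:aiAiai-is-equivalent-to-full} that $\mathcal{R}_1$ is a complete rewriting system defining $\FIS$, so each element of $\FIS$ is represented uniquely by its $\mathcal{R}_1$-irreducible normal form, and because $\mathcal{R}_1$ is length-reducing this normal form is a geodesic: its length equals the length of the element in $\FIS$ with respect to $\{a, a^\star\}$. Thus the ball of radius $n$ is in bijection with the set of $\mathcal{R}_1$-irreducible words of length exactly $n$ over $\{a, a^\star\}$.

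Next I would translate into the binary alphabet via $a \mapsto 0$ and $a^\star \mapsto 1$. Under this correspondence, the involution $\star$ on words (reversal together with swapping $a \leftrightarrow a^\star$) becomes exactly the operation ${}^\star$ on binary strings defined in the excerpt, namely bitwise negation followed by reversal. The key step is to identify the left-hand sides of $\mathcal{R}_1$ under this translation. The rule $a^n(a^\star)^n a^n \to a^n$ has left-hand side $0^n 1^n 0^n$, and one checks this equals $w w^\star w$ with $w = 0^n$, since $(0^n)^\star = 1^n$; similarly $(a^\star)^n a^n (a^\star)^n \to (a^\star)^n$ corresponds to $1^n 0^n 1^n = w w^\star w$ with $w = 1^n$.

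The heart of the argument is the claim that a binary string contains a factor of the form $0^n 1^n 0^n$ or $1^n 0^n 1^n$ (for some $n \geq 1$) if and only if it contains a factor of the form $w w^\star w$ for some nonempty $w \in \{0,1\}^\ast$, i.e.\ an element of $\cL_\star$. One direction is immediate, since $0^n 1^n 0^n$ and $1^n 0^n 1^n$ are themselves elements of $\cL_\star$ (taking $w = 0^n$ and $w = 1^n$ respectively). For the converse, I expect this to be the main obstacle: I would need to show that any factor $w w^\star w$ already forces the presence of a strictly simpler factor of the special form $0^n1^n0^n$ or $1^n1^n0^n$. The natural route is to mirror the combinatorial analysis already carried out in the proof of Lemma~\ref{Lem:aiAiai-is-equivalent-to-full}, which established precisely that the left-hand sides of $\mathbf{R}^\star_1$ are $\mathcal{R}_1$-reducible; translating that argument into binary, it shows that if a word is $\mathcal{R}_1$-irreducible then it contains no $w w^\star w$ factor, and conversely. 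Having established this equivalence, the set of $\mathcal{R}_1$-irreducible binary strings of length at most $n$ is exactly $\cL_\star^\complement(n)$, so restricting to length exactly $n$ and invoking the geodesic property gives $|\{$elements of $\FIS$ of length $n\}| = |\cL_\star^\complement(n)|$, as required. I would take care throughout to track the distinction between length "exactly $n$" (the ball of radius $n$ as defined in \S\ref{Subsec:growth}) and length "at most $n$" (as in the definition of $\cL_\star^\complement(n)$), ensuring the final count matches the stated convention.
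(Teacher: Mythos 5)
Your proof is correct, but it takes a detour that the paper's own proof avoids. The paper argues via the \emph{full} rewriting system $\mathbf{R}^\star_1$ of \eqref{Eq:free-star-rewriting} rather than the reduced system $\mathcal{R}_1$: since $\mathbf{R}^\star_1$ is complete (Proposition~\ref{Prop:Polak-prop}) and length-reducing, irreducible normal forms are geodesics, and---this is the key point---the left-hand sides of $\mathbf{R}^\star_1$ are \emph{literally} the words $ww^\star w$, i.e.\ exactly the elements of $\cL_\star$ under the binary translation. So ``$\mathbf{R}^\star_1$-irreducible'' and ``avoids every factor in $\cL_\star$'' coincide by definition, and the lemma is immediate with no combinatorial analysis at all. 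By instead working with $\mathcal{R}_1$, you are forced to prove what you correctly identify as the heart of your argument: that any factor $ww^\star w$ forces a factor $0^n1^n0^n$ or $1^n0^n1^n$. That claim is true, and as you note it is precisely the content of (the proof of) Lemma~\ref{Lem:aiAiai-is-equivalent-to-full}, so citing that lemma legitimately closes the gap---but it is machinery this particular lemma does not need; your route essentially re-derives the equivalence of the two systems, which the paper only exploits later (in the corollary on minimal forbidden subwords). Two minor points: your ``$1^n1^n0^n$'' is a typo for $1^n0^n1^n$; and the exactly-$n$ versus at-most-$n$ discrepancy you flag is real, but it is present in the paper's own statement and proof as well (the ball is implicitly read cumulatively there), and it is harmless for the growth-type conclusions drawn from the lemma.
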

\begin{proof}
Recall that the rewriting system $\mathbf{R}^\star_1$, defined in \eqref{Eq:free-star-rewriting}, defines $\FIS$ and is complete by Proposition~\ref{Prop:Polak-prop}. Since $\mathbf{R}^\star_1$ is also length-reducing, the shortest representative of any word is precisely its $\mathbf{R}^\star_1$-irreducible descendant; and hence the ball of radius $n$ in $\FIS$ has the same size as the set of words over $\{ a, a^\star \}$ of length $n$ not containing any left-hand side of $\mathbf{R}^\star_1$, which is precisely $|\cL_\star^\complement(n)|$. 
\end{proof}

As noted above, $\cL_\trev \neq \cL_\star$. For an infinite word $Q  \in \{ 0, 1\}^\N$, and for $v \in \{ 0, 1 \}^\ast$ let $\Phi_Q(v)$ be defined by $\Phi_Q(v) = v \oplus Q_{|v|}$, where $\oplus$ denotes the nimsum, i.e.\ bitwise binary addition, and $Q_{n}$ denotes the prefix of length $n$ of $Q$. Clearly, the map $\Phi_Q$ is a bijection -- indeed, it is its own inverse -- and length-preserving. Remarkably, for some words $Q$ the map $\Phi_Q$ also restricts to a bijection between the languages $\cL_\trev$ and $\cL_\star$. Let $\omega = (01)^\infty$, and let $\omega^{-1} = (10)^\infty = \omega \oplus 1^\infty$. Then it is clear that 
\begin{equation}\label{Eq:(01)-mult-property}
\Phi_\omega(uv) = \begin{cases*} 
\Phi_\omega(u) \Phi_\omega(v) & if $|u|$ is even, \\
\Phi_\omega(u) \Phi_{\omega^{-1}}(v) & if $|u|$ is odd. 
\end{cases*}
\end{equation}
Note that for any word $u$, if we set $n = |u|$ then $u^\star = (u \oplus 1^{n})^\trev$. Furthermore, we have 
\begin{equation}\label{Eq:1-prop}
\Phi_\omega(1^n) = 
\begin{cases*}
\omega_n^\trev & if $n$ is even, \\
\omega^{-1}_n & if $n$ is odd.
\end{cases*}
\end{equation}
Using these elementary properties, we can now prove that $\Phi_\omega$ is a bijection between our two languages. First, as a simple example, if we have the word $u = 011$, then $uu^\trev u = (011)(110)(011)$. Applying $\Phi_\omega$ to $uu^\trev u$ then yields the word $(001)(011)(001)$, which is of the form $vv^\star v$ for $v = 001$. We now prove that this always works. 

\begin{proposition}\label{Prop:exists-bijection-between-excluding}
The map $\Phi_\omega$ maps any word of the form $uu^{\operatorname{rev}} u$ to a word of the form $vv^\star v$. In particular, for all $n \geq 1$, $\Phi_\omega$ is a bijection between $\cL_\star^\complement(n)$ and $\cL_\trev^\complement(n)$.
\end{proposition}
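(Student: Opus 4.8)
The plan is to prove the statement in two stages: first the pointwise claim that $\Phi_\omega$ carries every $uu^\trev u$ to some $vv^\star v$, and then to bootstrap this into the claimed bijection of the factor-avoiding complements.

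For the first stage, I would compute $\Phi_\omega(uu^\trev u)$ directly, exploiting that $\omega$ is $2$-periodic. Writing $n = |u|$, the word $uu^\trev u$ has length $3n$, and I would split the mask $\omega_{3n}$ into three length-$n$ blocks. Since shifting $\omega$ by an even amount returns $\omega$ and by an odd amount returns $\omega^{-1}$, these blocks are $\omega_n,\ B_n,\ \omega_n$, where $B_n = \omega_n$ when $n$ is even and $B_n = \omega^{-1}_n$ when $n$ is odd. Hence $\Phi_\omega(uu^\trev u) = (u \oplus \omega_n)(u^\trev \oplus B_n)(u \oplus \omega_n)$. Setting $v = \Phi_\omega(u) = u \oplus \omega_n$ and using $v^\star = (v \oplus 1^n)^\trev = u^\trev \oplus (\omega^{-1}_n)^\trev$, the target word $vv^\star v$ has middle block $u^\trev \oplus (\omega^{-1}_n)^\trev$. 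Matching the two words then reduces to the reversal identities $(\omega^{-1}_n)^\trev = \omega_n$ for $n$ even and $(\omega^{-1}_n)^\trev = \omega^{-1}_n$ (a palindrome) for $n$ odd, both of which are immediate. This establishes $\Phi_\omega(uu^\trev u) = vv^\star v$; the multiplicativity rule \eqref{Eq:(01)-mult-property} together with \eqref{Eq:1-prop} gives an alternative derivation through the same case split.

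The key observation for passing to the complements is that the same computation goes through verbatim with $\omega$ replaced by $\omega^{-1}$, giving $\Phi_{\omega^{-1}}(uu^\trev u) = \Phi_{\omega^{-1}}(u)\,\Phi_{\omega^{-1}}(u)^\star\,\Phi_{\omega^{-1}}(u) \in \cL_\star$. Concretely, $\Phi_{\omega^{-1}}(W) = \Phi_\omega(W) \oplus 1^{|W|}$, and the complementation $v \mapsto v \oplus 1^n$ sends $\cL_\star$ to itself. Thus for both $Q \in \{\omega, \omega^{-1}\}$ we have $\Phi_Q(\cL_\trev) \subseteq \cL_\star$; and since each $\Phi_Q$ is a length-preserving involution, applying $\Phi_Q$ to the identity and substituting $v = \Phi_Q(w)$ yields the converse inclusion $\Phi_Q(vv^\star v) = \Phi_Q(v)\,\Phi_Q(v)^\trev\,\Phi_Q(v) \in \cL_\trev$. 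Together these give the biconditional: for $Q \in \{\omega, \omega^{-1}\}$ and any word $Y$, one has $Y \in \cL_\trev$ if and only if $\Phi_Q(Y) \in \cL_\star$.

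Finally I would transfer factor-occurrences. For a word $W = XYZ$ with $|X| = p$, the factor of $\Phi_\omega(W)$ occupying the positions of $Y$ is $Y$ exclusive-or-ed with the length-$|Y|$ segment of $\omega$ starting at position $p+1$; by $2$-periodicity this segment is $\omega_{|Y|}$ if $p$ is even and $\omega^{-1}_{|Y|}$ if $p$ is odd, so that factor equals $\Phi_\omega(Y)$ or $\Phi_{\omega^{-1}}(Y)$. The biconditional then shows that $W$ contains an element of $\cL_\trev$ as a factor precisely when $\Phi_\omega(W)$ contains an element of $\cL_\star$. Since $\Phi_\omega$ is a length-preserving involution on $\{0,1\}^{\leq n}$, it follows that it restricts to a bijection between $\cL_\trev^\complement(n)$ and $\cL_\star^\complement(n)$. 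I expect the only real subtlety to be exactly this position-parity bookkeeping: a forbidden factor may begin at an even or an odd position, and $\Phi_\omega$ presents it with two different phases of $\omega$; the point that makes everything work is that both phases --- $\Phi_\omega$ and $\Phi_{\omega^{-1}}$ --- intertwine the two triple-constructions, which is why the extra $\omega^{-1}$ computation is needed rather than $\omega$ alone.
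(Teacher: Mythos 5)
Your proof is correct. For the pointwise identity $\Phi_\omega(uu^\trev u) = vv^\star v$ you and the paper do essentially the same thing: the paper splits into the cases $|u|$ even/odd and uses the multiplicativity rule \eqref{Eq:(01)-mult-property} to reduce to $\Phi_\omega(u^\trev) = \Phi_\omega(u)^\star$ (even case) and $\Phi_{\omega^{-1}}(u^\trev) = \Phi_\omega(u)^\star$ (odd case), which is your block decomposition of the mask $\omega_{3n}$ in different notation. The genuine difference is in the passage to the complements. The paper's entire justification there is the sentence ``since $\Phi_\omega$ is length-preserving and a bijection on all words, the second part follows from the first,'' which read literally leaves a gap: $\cL_\star^\complement(n)$ and $\cL_\trev^\complement(n)$ are defined by forbidding \emph{factors}, and a factor beginning at an odd position of $W$ gets XORed with the other phase of the mask, so the first part (a statement about $\Phi_\omega$ applied to whole words, i.e.\ in effect about factors starting at even positions) does not by itself control such occurrences. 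Your third paragraph supplies exactly the missing step: the biconditional $Y \in \cL_\trev \Leftrightarrow \Phi_Q(Y) \in \cL_\star$ for \emph{both} phases $Q \in \{\omega, \omega^{-1}\}$ (via $\Phi_{\omega^{-1}}(W) = \Phi_\omega(W) \oplus 1^{|W|}$ and the fact that bitwise complementation preserves $\cL_\star$, or by rerunning the block computation), followed by the position-parity transfer of factor occurrences. So your write-up is not just correct but strictly more complete than the paper's; the extra care buys an actual proof of the ``in particular'' clause, which is the part of the proposition that Lemma~\ref{Lem:radius-is-just-L-1} and Theorem~\ref{Thm:main-thm-growth} rely on.
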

\begin{proof}
Since $\Phi_\omega$ is length-preserving and a bijection (on the set of all words), the second part of the proposition follows from the first part. Thus, let $u \in \{  0, 1 \}^\ast$ be any word. We will prove that 
\begin{equation}\label{Eq:key-property}
\Phi_\omega(u u^\trev u) = vv^\star v, \quad \text{ where } v = \Phi_\omega(u),
\end{equation}
which will establish the claim. Throughout, let $|u| = n$. To prove \eqref{Eq:key-property}, suppose first that $n$ is even. Then we have 
\begin{align*}
\Phi_\omega(u^\trev) = u^\trev \oplus \omega_{n} = ( u \oplus \omega_n^\trev)^\trev = (u \oplus \omega_n \oplus 1^n)^\trev = (u \oplus \omega_n)^\star = \Phi_\omega(u)^\star
\end{align*}
where the third equality follows from the fact that $n$ is even. Thus, by \eqref{Eq:(01)-mult-property}, we have 
\begin{align*}
\Phi_\omega(uu^\trev u) = v \Phi_\omega(u^\trev) v =  v v^\star v
\end{align*}
thus establishing \eqref{Eq:key-property} as required. Suppose now instead that $n$, the length of $u$, is odd. Then by \eqref{Eq:(01)-mult-property} we have
\begin{align*}
\Phi_\omega(uu^\trev u) = \Phi_\omega(uu^\trev) \Phi_\omega(u) = \Phi_\omega(u) \Phi_{\omega^{-1}}(u^\trev) \Phi_\omega(u)
\end{align*}
as the length of $uu^\trev$ is even. Thus, to establish \eqref{Eq:key-property}, we now only need to establish $\Phi_{\omega^{-1}}(u^\trev) = v^\star$. But this follows directly in a similar manner as above, since
\begin{align*}
\Phi_{\omega^{-1}}(u^\trev) = u^\trev \oplus \omega_n^{-1} &= u^\trev \oplus \omega_n \oplus 1^n \\
&= (u \oplus (\omega_n \oplus 1^n)^\trev)^\trev \\
&= (u \oplus (\omega_n)^\star)^\trev \\
&= (u \oplus \omega^{-1}_n)^\trev \\
&= (u \oplus \omega_n \oplus 1^n)^\trev = (u \oplus \omega_n)^\star = \Phi_\omega(u)^\star = v^\star
\end{align*}
where the second equality follows from the fact that $n$ is odd. This completes the proof of \eqref{Eq:key-property} in the case that $n$ is odd, and hence also completes the proof of the proposition.
\end{proof}

Thus, combining Lemma~\ref{Lem:radius-is-just-L-1} with Proposition~\ref{Prop:exists-bijection-between-excluding}, we see that the ball of radius $n$ in $\FIS$ has size $|\cL_\trev^\complement(n)|$. Hence, by Proposition~\ref{Prop:growth-of-c-trev}, we have proved the main theorem of this section. 

\begin{theorem}\label{Thm:main-thm-growth}
The monogenic free regular $\star$-monoid $\FIS$ has intermediate growth. 
\end{theorem}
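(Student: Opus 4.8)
The plan is to assemble the three preceding results into a single chain, since at this point essentially all of the real work has been done. First I would invoke Lemma~\ref{Lem:radius-is-just-L-1} to identify the growth function of $\FIS$ (with respect to the monoid generating set $\{a, a^\star\}$) with the counting function $n \mapsto |\cL_\star^\complement(n)|$: the ball of radius $n$ has exactly this size because $\mathbf{R}^\star_1$ is complete and length-reducing, so the irreducible words are precisely the geodesic representatives of the elements of $\FIS$. This reduces the determination of the growth type of $\FIS$ to that of the word-counting function $|\cL_\star^\complement(n)|$.

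Next I would transport this count to the better-understood language $\cL_\trev$. By Proposition~\ref{Prop:exists-bijection-between-excluding}, the map $\Phi_\omega$ is a length-preserving bijection between $\cL_\star^\complement(n)$ and $\cL_\trev^\complement(n)$, so that $|\cL_\star^\complement(n)| = |\cL_\trev^\complement(n)|$ for every $n$. Combining this with the previous step shows that the growth function of $\FIS$ coincides, term by term, with $n \mapsto |\cL_\trev^\complement(n)|$.

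Finally I would appeal to Proposition~\ref{Prop:growth-of-c-trev} of Currie \& Rampersad, which asserts that $|\cL_\trev^\complement(n)|$ has intermediate growth; since growth type is a property of the counting function alone, and the two functions are literally equal, it follows that $\FIS$ has intermediate growth. The only point requiring a word of care is the passage from \emph{intermediate growth of a counting function} to \emph{intermediate growth of the monoid}: one must check that the sphere-versus-ball discrepancy in the definition of the growth function is immaterial. This is harmless, since the Currie \& Rampersad bounds $\widetilde{u}(n) \leq |\cL_\trev^\complement(n)| \leq 2\widetilde{u}(n)$ sandwich the count between two multiples of a function growing like $\exp(c\sqrt{n})$, which is super-polynomial yet sub-exponential, and this dichotomy is preserved whether one counts words of length exactly $n$ or at most $n$.

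Given the scaffolding already in place, there is no real obstacle remaining; the genuine difficulty of the theorem has been entirely absorbed into the two preceding results. The first is the construction of the explicit involution $\Phi_\omega$ in Proposition~\ref{Prop:exists-bijection-between-excluding}, which reduces the hitherto unstudied language $\cL_\star$ to the studied language $\cL_\trev$, and the second is the deep number-theoretic input of Currie \& Rampersad (via Rhoades' mock-theta asymptotics for strongly unimodal sequences) that pins down the growth of $\cL_\trev^\complement$. The theorem itself is then a one-line corollary of these, exactly as summarised in the paragraph preceding its statement.
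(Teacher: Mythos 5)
Your proof is correct and follows exactly the same chain as the paper's: Lemma~\ref{Lem:radius-is-just-L-1} identifies the growth function of $\FIS$ with $n \mapsto |\cL_\star^\complement(n)|$, Proposition~\ref{Prop:exists-bijection-between-excluding} gives $|\cL_\star^\complement(n)| = |\cL_\trev^\complement(n)|$ via the bijection $\Phi_\omega$, and Proposition~\ref{Prop:growth-of-c-trev} then yields intermediate growth. Your extra remark about the sphere-versus-ball discrepancy (handled by the sandwich $\widetilde{u}(n) \leq |\cL_\trev^\complement(n)| \leq 2\widetilde{u}(n)$) is a point of care the paper passes over silently, but it does not change the argument.
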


Recall that a monoid is said to be \textit{rational} if there is regular language of unique normal forms for the monoid; and that a monoid is \textit{automatic} if there is a regular language of normal forms such that checking equality or whether two words differ by a factor of a single generator can be done by a finite state automaton. We will not use the precise definition in this article, instead referring the reader to \cite{Campbell2001}. Indeed, the only property of automatic monoids we will use is the fact that they are rational, i.e.\ admit a regular language of normal forms \textit{with uniqueness} \cite[Corollary~5.6]{Campbell2001}. In \cite{CuttingSolomon2001}, the authors prove that, unlike free groups and free monoids, free \textit{inverse} monoids are neither rational nor automatic. While their method is quite different from ours (since free inverse monoids never have intermediate growth \cite{CFGrowth}), our conclusions for free regular $\star$-monoids are analogous:

\begin{corollary}
No free regular $\star$-monoid is rational. Therefore, no free regular $\star$-monoid is automatic. 
\end{corollary}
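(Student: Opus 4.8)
The plan is to prove the stronger of the two assertions, namely that no free regular $\star$-monoid is rational; non-automaticity then follows at once, since every automatic monoid is rational by \cite[Corollary~5.6]{Campbell2001}. The engine of the whole argument is the principle that a finitely generated monoid which admits a regular language of unique normal forms cannot have intermediate growth. Indeed, if $L \subseteq A^\ast$ is a regular cross-section for a monoid $M$, then $\sum_{w \in L} x^{|w|}$ is the generating function of a regular language and is therefore rational; hence the number of elements of $M$ whose normal form has length at most $n$ grows either polynomially or exponentially in $n$. Comparing this word length with geodesic length (this is the standard length-distortion step in such arguments) shows that the growth of $M$ itself is polynomial or exponential, and never intermediate. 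I would record this as a short lemma, citing it as essentially standard.

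For the monogenic case the conclusion is then immediate: by Theorem~\ref{Thm:main-thm-growth} the monoid $\FIS$ has intermediate growth, so it admits no regular language of unique normal forms, and hence $\FIS$ is not rational.

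To reach arbitrary rank $r \geq 1$ I would exhibit $\FIS$ as a retract of $\FrS$ and argue that rationality is inherited by retracts. Writing $A = \{a_1, a_1^\star, \dots, a_r, a_r^\star\}$ for the monoid generators, the freeness of $\FrS$ yields a regular $\star$-homomorphism $\pi \colon \FrS \to \FIS$ with $a_1 \mapsto a$, $a_1^\star \mapsto a^\star$ and $a_i, a_i^\star \mapsto 1$ for $i \geq 2$; together with the inclusion $\iota \colon \FIS \to \FrS$, $a \mapsto a_1$, it satisfies $\pi \circ \iota = \operatorname{id}_{\FIS}$, so $\iota$ is injective and identifies $\FIS$ with the regular $\star$-submonoid $\langle a_1, a_1^\star \rangle \leq \FrS$. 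Since $\pi$ sends the superfluous generators to $1$ and the others to generators it is length-non-increasing, and a short computation then shows that every $\FrS$-geodesic representing an element of $\iota(\FIS)$ already lies in $\{a_1, a_1^\star\}^\ast$: applying the letter-erasing homomorphism $A^\ast \to \{a_1,a_1^\star\}^\ast$ to such a geodesic cannot shorten it, so no letter is erased. Using this, from a regular cross-section $L$ of $\FrS$ one aims to manufacture a regular cross-section of $\FIS$, contradicting the monogenic case and forcing $\FrS$ to be non-rational as well.

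The main obstacle is exactly this last transfer. A regular language of unique normal forms for $\FrS$ need \emph{not} consist of geodesic words, so one cannot simply intersect $L$ with $\{a_1,a_1^\star\}^\ast$ and expect to recover all of $\FIS$. Moreover the set of those normal forms in $L$ that represent elements of the submonoid $\iota(\FIS)$ is cut out by a submonoid-membership condition which is not obviously regular; in fact recognising it is essentially as hard as recognising the geodesic language of $\FIS$, which is itself non-regular (this is precisely what intermediate growth tells us). The delicate point is therefore to prove that rationality genuinely descends along the splitting $\pi \circ \iota = \operatorname{id}_{\FIS}$ — i.e.\ that a retract of a rational monoid is rational — by exploiting the retraction $\pi$ to drive a finite-state recognition of $\iota(\FIS)$ within the cross-section, or else to transport the intermediate-growth obstruction from $\FIS$ to $\FrS$ directly using the bounded distortion established above. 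I expect this inheritance step, rather than the growth computation, to be where the real work lies.
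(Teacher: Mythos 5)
Your rank-one argument is essentially the paper's: rationality would force the growth function to be governed by the rational generating function of a regular language, hence polynomial or exponential, contradicting Theorem~\ref{Thm:main-thm-growth}; so $\FIS$ is not rational, and non-automaticity follows from \cite[Corollary~5.6]{Campbell2001}. The problem is the passage to rank $r \geq 2$, and here your proposal has a genuine gap which you yourself flag: you never prove that rationality descends to retracts, and your supporting observation (that every $\FrS$-geodesic representing an element of $\langle a_1, a_1^\star\rangle$ lies in $\{a_1,a_1^\star\}^\ast$ --- which is correct, via the length-decreasing endomorphism $\iota\circ\pi$) does not bridge it, precisely because a regular cross-section need not consist of geodesics, and membership of a normal form in the submonoid $\iota(\FIS)$ is not a condition a finite automaton can test. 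As written, your proof establishes the corollary only for $r=1$; the ``inheritance step'' you defer is the entire content of the general case, and no closure result of the form ``a retract of a rational monoid is rational'' is available in the literature you invoke.

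The paper closes this gap by working with quotients rather than submonoids, which is exactly where your difficulty evaporates. Since every rule $ww^\star w \to w$ of $\mathbf{R}^\star(X)$ has the same set of letters on its two sides, the set of letters occurring in an element of $\FSX(X)$ is well defined; hence the set $I_1$ of elements whose representatives involve a generator other than $x_1, x_1^\star$ is an ideal, and the Rees quotient $\FSX(X)/I_1$ is $\FIS$ with a zero adjoined, and it has no zero divisors. Cutting \& Solomon proved the relevant closure properties outright: rationality passes to Rees quotients without zero divisors \cite[Theorem~1.10]{CuttingSolomon2001}, and adjoining a zero is compatible with rationality \cite[Proposition~1.9]{CuttingSolomon2001}. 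So rationality of $\FrS$ would force rationality of $\FIS$, contradicting the rank-one case --- no new closure theorem needs to be proved. If you want to repair your own route, you would have to supply a proof of the retract-closure statement; the lesson of the paper's argument is that quotient constructions (where the normal-form language maps forward) are the right tool here, rather than submonoid or retract constructions (where one would need to pull a regular cross-section back, which is what your automaton-membership obstruction rules out).
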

\begin{proof}
If $\FIS$ were rational, then its growth function would be alternative, i.e.\ exponential or polynomial, since the generating function for the number of words of a fixed length would be a rational function. Thus, by Theorem~\ref{Thm:main-thm-growth} the monogenic free regular $\star$-monoid $\FIS$ is not rational (and therefore not automatic). Let $X = \{ x_1, x_2, \dots \}$ and consider the free regular $\star$-monoid $\FSX(X)$. Let $I_1 \subset \FSX(X)$ be the set of all elements which can be represented by a word not involving the letter $x_1$. Then since $\mathbf{R}^\star(X)$ defines $\FSX(X)$ and for each rule in $\mathbf{R}^\star(X)$ the set of letters in the left-hand side is the same as the set of letters in the right-hand side, it follows that $I_1$ is a ideal of $\FSX(X)$. The Rees quotient monoid $\FSX(X) / I_1$ is $\FIS$ with a zero adjoined. Since adjoining a zero to a rational monoid results in a rational monoid (\cite[Proposition~1.9]{CuttingSolomon2001}), and since a Rees quotient with no zero divisors of a monoid $M$ is rational if $M$ is rational (\cite[Theorem~1.10]{CuttingSolomon2001}), it follows that $\FSX(X)$ is not rational; and therefore also not automatic. 
\end{proof}

Returning briefly to languages, we note that one of the consequences of Proposition~\ref{Prop:exists-bijection-between-excluding} is that $\varphi(\cL_\trev) = \cL_\star$. Another consequence, in view of Lemma~\ref{Lem:aiAiai-is-equivalent-to-full}, of independent interest is the following. For $n \geq 1$, let $U_n = 0^n 1^n 0^n$ and $V_n = 1^n 0^n 1^n$. Then we have the following characterization:

\begin{corollary}
A minimal set of forbidden subwords for the set $\cL_\trev^\complement$ of words avoiding 
\[
\{ w w^\trev w \mid w \in \{ 0, 1 \}^\star \}
\]
is the set of words $\varphi( \{ U_n, V_n \mid n \geq 1 \})$, i.e.\
\[
\{ 000, 111, 011001, 100110, 010010010, 101101101, \dots \}
\]
\end{corollary}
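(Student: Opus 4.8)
The plan is to transport the known minimal forbidden set for $\cL_\star$ across the involution $\varphi = \Phi_\omega$. Since both $\cL_\star^\complement$ and $\cL_\trev^\complement$ are factorial (closed under taking factors), a word is a minimal forbidden subword of $\cL_\trev^\complement$ exactly when it is a factor-minimal element of $\cL_\trev$, and likewise for $\cL_\star$. First I would record that $\{U_n, V_n \mid n \geq 1\}$ is precisely the minimal forbidden set for $\cL_\star^\complement$: Lemma~\ref{Lem:aiAiai-is-equivalent-to-full} already shows that avoiding $\{U_n, V_n\}$ is the same as avoiding $\cL_\star$, so it remains only to check that these words form a factor-antichain. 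This is a one-line combinatorial observation: any factor of $U_n = 0^n 1^n 0^n$ of the shape $0^a 1^b 0^c$ with $a, c \geq 1$ must contain the entire central block, so $b = n$, and hence it can equal some $U_m = 0^m 1^m 0^m$ only when $a = b = c = n$, i.e.\ it is $U_n$ itself; moreover $U_n$ contains no factor $1^a 0^b 1^c$ with $a,c \geq 1$ at all, as it has a single $1$-block. Thus $U_n$ contains no $U_m$ or $V_m$ other than itself, and symmetrically for $V_n$.

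The key device will be a \emph{factor dictionary}. Because $\varphi(W)_i = W_i \oplus \omega_i$, the window of $\varphi(W)$ occupying positions $[j, j+\ell)$ equals $W_{[j,j+\ell)} \oplus \omega_\ell = \Phi_\omega(W_{[j, j+\ell)})$ when $j$ is even, and equals $W_{[j,j+\ell)} \oplus \omega^{-1}_\ell = \Phi_{\omega^{-1}}(W_{[j,j+\ell)})$ when $j$ is odd. The crucial point is that \emph{both} $\Phi_\omega$ and $\Phi_{\omega^{-1}}$ restrict to bijections $\cL_\star \to \cL_\trev$: the former by Proposition~\ref{Prop:exists-bijection-between-excluding}, and the latter because $\Phi_{\omega^{-1}}(x) = \overline{\Phi_\omega(x)}$, where bitwise negation $\overline{(\cdot)}$ preserves each of $\cL_\star$ and $\cL_\trev$ (indeed $\overline{vv^\star v} = \overline{v}\,(\overline{v})^\star\,\overline{v}$ and $\overline{uu^\trev u} = \overline{u}\,(\overline{u})^\trev\,\overline{u}$). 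Consequently a window of $\varphi(W)$ lies in $\cL_\trev$ if and only if the same window of $W$ lies in $\cL_\star$, irrespective of parity of the starting position.

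With the dictionary in hand both required facts follow quickly. For minimality, applying it with $W = U_n$ shows that a proper factor of $\varphi(U_n)$ lies in $\cL_\trev$ precisely when the corresponding proper factor of $U_n$ lies in $\cL_\star$; since $U_n$ is factor-minimal in $\cL_\star$ there is no such factor, and as $\varphi$ is an involution with $\varphi(\cL_\star) = \cL_\trev$ we have $\varphi(U_n) \in \cL_\trev$, whence $\varphi(U_n)$ is factor-minimal in $\cL_\trev$ (and likewise $\varphi(V_n)$). For exhaustiveness, an occurrence of $\varphi(U_n)$ inside $\varphi(W)$ at positions $[j, j+3n)$ corresponds to an occurrence in $W$ of $U_n$ if $j$ is even and of $\overline{U_n} = V_n$ if $j$ is odd, and symmetrically for $\varphi(V_n)$ using $\overline{V_n} = U_n$; because $\{U_n, V_n\}$ is closed under negation this yields that $\varphi(W)$ avoids $\varphi(\{U_n, V_n\})$ iff $W$ avoids $\{U_n, V_n\}$, which by Lemma~\ref{Lem:aiAiai-is-equivalent-to-full} and the dictionary is iff $\varphi(W)$ avoids $\cL_\trev$. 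Hence $\varphi(\{U_n, V_n\})$ is a complete forbidden set all of whose members are factor-minimal, so it is exactly the minimal forbidden set of $\cL_\trev^\complement$. The one genuine obstacle is that $\varphi$ is not a morphism, so a factor read at an odd position is twisted by $\Phi_{\omega^{-1}}$ rather than $\Phi_\omega$; the whole argument hinges on resolving this through the negation-symmetry $\Phi_{\omega^{-1}} = \overline{\Phi_\omega(\cdot)}$ together with the negation-closure $\overline{U_n} = V_n$ of the forbidden set.
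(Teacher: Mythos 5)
Your proof is correct and follows essentially the same route the paper intends (the paper states this corollary without a written proof, as a direct consequence of Lemma~\ref{Lem:aiAiai-is-equivalent-to-full} together with the bijection $\Phi_\omega$ of Proposition~\ref{Prop:exists-bijection-between-excluding}): namely, identify $\{U_n, V_n \mid n \geq 1\}$ as the minimal forbidden set for $\cL_\star^\complement$ and transport it across $\varphi$. Your explicit handling of the position-parity issue --- that a factor read at an odd position is twisted by $\Phi_{\omega^{-1}}$ rather than $\Phi_\omega$, resolved via $\Phi_{\omega^{-1}}(x) = \overline{\Phi_\omega(x)}$ and the negation-closure $\overline{U_n} = V_n$ of the forbidden set --- fills in precisely the detail the paper leaves implicit, and is the right way to do it.
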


Thus, we only need to exclude at most two words of each length, which is a significantly smaller set of excluded subwords than simply excluding $w w^\trev w$ for all words $w$, as this is a list of  $~\sim 2^n$ excluded subwords of length $\leq n$.

We do not know how to determine the rate of growth of $\FrS$ for $r \geq 2$. It is, of course, exponential (since $\FrS$ surjects the free group on two generators), but we suspect that this exponential growth rate is not algebraic, by analogy with the rather complicated number-theoretic links with the intermediate growth of $\cL_\trev^\complement$ in Proposition~\ref{Prop:growth-of-c-trev}. 

\begin{question}
What is the exponential growth rate of $\FrS$? Is it an algebraic number?
\end{question}

The rate of growth of free inverse monoids $\FIM_r$ of rank $r \geq 2$ was very recently determined \cite{CFGrowth}, where it was discovered that the exponential growth rate of $\FIM_r$ is an algebraic number $\alpha_r$. However, it would appear that free inverse monoids are better behaved with respect to growth than $\FrS$, as the monogenic free inverse monoid has cubic growth.

\section{The integral (co)homology of $\FrS$}\label{Sec:homology-of-FRS}

\noindent In this section we compute all integral (co)homology groups of the free regular $\star$-monoid $\FrS$ of any finite rank $r \geq 1$. To do this, we will construct the first terms of Kobayashi's resolution $(P_\ast, \partial_\ast)$, described in \S\ref{Subsec:kobayashi}), associated to the rewriting system $\cR_1$, defined by \eqref{Eq:R1-definition}, which is complete and defines $\FIS$. We then apply $- \otimes_{\Z \FIS} \Z$ to $(P_\ast, \partial_\ast)$ and construct a collapsing scheme for the resulting chain complex which removes all cells in dimension $3$ and above; this will yield a full description of the integral (co)homology groups of $\FIS$.

\subsection{Low-dimensional chain maps} For ease of notation, we will consider $\FIS$ as generated (as a monoid) by $\{ a, A \}$, where $A$ denotes $a^\star$. Then, in Kobayashi's resolution associated to $\cR_1$, we have $V^{(1)} = \{ a, A \}$, and furthermore we have that $V^{(2)}$ consists of two types of $2$-tuples:
\[
X_i = (a, a^{i-1} A^i a^i), \quad \text{and} \quad X_i^\star = (A, A^{i-1} a^i A^i) \quad (i \geq 1).
\]
It is easy to compute the action of $\partial_2$, which we record in the following lemma:

\begin{lemma}\label{Lem:partial-2-equation}
For all $i \geq 1$, we have 
\[
\partial_2(X_i) =  (a) \circ \sum_{s=0}^{i-1} a^s A^i a^i  + (A) \circ \sum_{s=0}^{i-1} A^s a^i.
\]
\end{lemma}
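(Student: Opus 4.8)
The plan is to compute $\partial_2(X_i)$ directly from the inductive definition \eqref{Eq:partial_n-definition}, specialised to $n=1$: for a pair $(v_1,v_2)\in\cE$ it reads
\[
\partial_2(v_1,v_2) = (v_1)\circ v_2 - i_1\!\left(\partial_1(v_1)\circ v_2\right).
\]
With $v_1 = a$ and $v_2 = a^{i-1}A^i a^i$ the first summand $(a)\circ a^{i-1}A^i a^i$ is already one of the desired basis terms, so the entire content of the lemma lies in evaluating $i_1(\partial_1(a)\circ v_2)$.

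First I would unwind $\partial_1(a)\circ v_2$. Since $\partial_1(a)=a-1\in\ZM$ and the right action on $P_0=\ZM$ is just right multiplication, this equals $a\cdot a^{i-1}A^i a^i - a^{i-1}A^i a^i = a^i A^i a^i - a^{i-1}A^i a^i$ in $\ZM$. The crucial reduction step is that, under the identification of $\ZM$ with $\Z\Irr(\cR_1)$, the word $a^i A^i a^i$ is \emph{not} irreducible: it is precisely the left-hand side of a rule of $\cR_1$ and reduces to $a^i$, whereas $a^{i-1}A^i a^i$ is irreducible. Hence $\partial_1(a)\circ v_2 = a^i - a^{i-1}A^i a^i$ as an element of $\Z\Irr(\cR_1)$, and I can apply $i_1$ termwise.

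Next I would evaluate $i_1$ on each term via its explicit formula $i_1(x_1\cdots x_k)=\sum_{j=1}^{k}(x_j)\circ x_{j+1}\cdots x_k$. For $i_1(a^i)$ this gives $\sum_{s=0}^{i-1}(a)\circ a^s$. For $i_1(a^{i-1}A^i a^i)$ I would split the sum according to the three blocks of $\underbrace{a\cdots a}_{i-1}\,\underbrace{A\cdots A}_{i}\,\underbrace{a\cdots a}_{i}$: the initial $a$-block contributes $\sum_{t=0}^{i-2}(a)\circ a^t A^i a^i$, the middle $A$-block contributes $\sum_{s=0}^{i-1}(A)\circ A^s a^i$, and the final $a$-block contributes $\sum_{s=0}^{i-1}(a)\circ a^s$. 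The pleasant feature of the calculation is that in forming $i_1(a^i)-i_1(a^{i-1}A^i a^i)$ the two copies of $\sum_{s=0}^{i-1}(a)\circ a^s$ cancel exactly, leaving
\[
i_1(\partial_1(a)\circ v_2) = -\sum_{t=0}^{i-2}(a)\circ a^t A^i a^i - \sum_{s=0}^{i-1}(A)\circ A^s a^i.
\]
Substituting this back into the formula for $\partial_2$ and absorbing the leading term $(a)\circ a^{i-1}A^i a^i$ as the $s=i-1$ summand of the first sum then yields the stated expression.

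The computation is essentially bookkeeping, so the only genuine care is needed in two places, neither of which is a serious obstacle. First, $i_1$ is defined only on (linear combinations of) irreducible words, which forces the reduction $a^i A^i a^i\to a^i$ \emph{before} $i_1$ is applied; omitting this would produce a different, incorrect expression. Second, to confirm that the output is a legitimate $\ZM$-combination of basis elements I would verify that each word $a^s A^i a^i$ with $0\le s\le i-1$ is irreducible (its leading $a$-block has length $s<i$, so it contains no factor $a^j A^j a^j$ or $A^j a^j A^j$), so that $(a)\circ a^s A^i a^i$ is indeed an element of $P_1$. Both points amount to applying the identification of $\FIS$ with $\Irr(\cR_1)$ attentively.
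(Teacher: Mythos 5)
Your proposal is correct and follows essentially the same route as the paper's proof: apply the inductive definition of $\partial_2$, reduce $a^iA^ia^i$ to $a^i$ before applying $i_1$, expand $i_1$ on the two irreducible words, and observe that the $i_1(a^i)$-contributions (your $\sum_{s=0}^{i-1}(a)\circ a^s$) cancel, with the leading term absorbed as the $s=i-1$ summand. The only difference is presentational — the paper keeps $i_1(a^i)$ symbolic rather than expanding it — and your added irreducibility checks are correct but implicit in the paper.
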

\begin{proof}
Here we use \eqref{Eq:partial_n-definition}, which yields that 
\begin{align*}
\partial_2(X_i) &= (a) \circ a^{i-1}A^i a^i - i_1 \left( \partial_1 (a) \circ a^{i-1} A^i a^i  \right) \\
&= (a) \circ a^{i-1}A^i a^i - i_1 \left( a^i - a^{i-1} A^i a^i \right)  \\
&= (a) \circ a^{i-1}A^i a^i - i_1(a^i)  + \left( (a) \circ \sum_{s=0}^{i-2} a^s A^i a^i \right) + \left( (A) \circ \sum_{s=0}^{i-1} A^s a^i \right) + i_1(a^i) \\
&= (a) \circ \sum_{s=0}^{i-1} a^s A^i a^i  + (A) \circ \sum_{s=0}^{i-1} A^s a^i,
\end{align*}
which is what was to be shown. 
\end{proof}
Of course, since $X_i$ is obtained from $X_i^\star$ by swapping all occurrences of $a$ for $A$, and vice versa, the analogous expression holds true for $\partial_2(X_i^\star)$. We now describe the action of $\partial_3$. It is easy to see, by considering all possible overlaps between rules in $\cR_1$, that any element of $V^{(3)}$ starting with $a$ is necessarily of one of the following three forms:
\begin{align}
&(X_i, A^i) \quad \text{where } 1 \leq i, \tag{I} \\
&(X_i, A^j a^j) \quad \text{where } 1 \leq j < i, \tag{II} \\
&(X_i, a^j A^k a^k) \quad \text{where } 1 \leq i \text{ and } 1 \leq j < k \leq i + j. \tag{III}
\end{align}
Notice that we have simplified the notation somewhat by writing e.g.\ $(X_i, A^i)$ rather than $(a, a^{i-1} A^i a^i, A^i)$. Analogously, we can obtain all elements of $V^{(3)}$ starting with $A$ by swapping all occurrences of $a$ for $A$ in the above sequences, and vice versa; this results in three other types of sequences, which we will refer to these as elements of type I$^\star$, II$^\star$, and III$^\star$, respectively. Clearly, by symmetry it is sufficient to compute the action of $\partial_3$ on the sequences starting with $a$. We do this now. 

\begin{lemma}\label{Lem:partial3-type1-tuples}
For all tuples $(X_i, A^i)$ of type I, we have 
\[
\partial_3(X_i, A^i) = X_i \circ A^i - X_i^\star.
\]
\end{lemma}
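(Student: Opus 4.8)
The plan is to evaluate $\partial_3(X_i, A^i)$ straight from the inductive definition \eqref{Eq:partial_n-definition} of the boundary maps. Specializing that formula to $n = 2$ gives
\[
\partial_3(X_i, A^i) = X_i \circ A^i - i_2\!\left( \partial_2(X_i) \circ A^i \right),
\]
so the whole problem reduces to showing that the correction term $i_2(\partial_2(X_i) \circ A^i)$ equals $X_i^\star$. First I would substitute the expression for $\partial_2(X_i)$ supplied by Lemma~\ref{Lem:partial-2-equation} and carry out the right multiplication by $A^i$ inside the monoid. Using the rule $A^i a^i A^i \longrightarrow A^i$ of $\cR_1$, each word $a^s A^i a^i$ reduces, after appending $A^i$, to the irreducible word $a^s A^i$, while each word $A^s a^i$ (with $0 \le s \le i-1$) yields the already irreducible word $A^s a^i A^i$. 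This gives
\[
\partial_2(X_i) \circ A^i = (a) \circ \sum_{s=0}^{i-1} a^s A^i + (A) \circ \sum_{s=0}^{i-1} A^s a^i A^i.
\]

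Next I would apply $i_2$ termwise, exploiting the fact from \eqref{Eq:i-n-definition} that $i_2((v_1) \circ x) = 0$ whenever $v_1 x$ is irreducible. Prepending $a$ to $a^s A^i$ yields the irreducible word $a^{s+1} A^i$, so every term in the first sum is killed; prepending $A$ to $A^s a^i A^i$ yields $A^{s+1} a^i A^i$, which is irreducible precisely when $s \le i-2$ and reducible only for $s = i-1$, when it becomes $A^i a^i A^i$. Hence all terms vanish except one, and the computation collapses to showing
\[
i_2\!\left( (A) \circ A^{i-1} a^i A^i \right) = X_i^\star.
\]

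This last step is the crux and the main obstacle. Here $A \cdot A^{i-1} a^i A^i = A^i a^i A^i$ has itself as its minimal reducible prefix, so in the notation of \eqref{Eq:i-n-definition} one has $v_{n+1} = A^{i-1} a^i A^i$, the residual suffix $y$ is empty, and the leading term $(A, A^{i-1} a^i A^i) \circ y$ is exactly $X_i^\star$. The work lies in verifying that the accompanying correction $i_2(i_1(\partial_1(A) \circ A^{i-1} a^i A^i))$ vanishes. To see this I would compute $\partial_1(A) \circ A^{i-1} a^i A^i = A^i - A^{i-1} a^i A^i$ in $\ZM$, apply $i_1$ to both words, and observe that the contribution $(A) \circ \sum_{s=0}^{i-1} A^s$ coming from $i_1(A^i)$ cancels exactly against one of the three blocks produced by $i_1(A^{i-1} a^i A^i)$; the two surviving families of basis elements, namely $(a) \circ a^s A^i$ and $(A) \circ A^s a^i A^i$ with $s \le i-2$, are then annihilated by $i_2$ for precisely the irreducibility reasons already used above. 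With the correction term gone, $i_2((A) \circ A^{i-1} a^i A^i) = X_i^\star$, and substituting back yields $\partial_3(X_i, A^i) = X_i \circ A^i - X_i^\star$ as claimed.
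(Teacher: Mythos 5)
Your proposal is correct and follows essentially the same route as the paper: unfold \eqref{Eq:partial_n-definition}, substitute Lemma~\ref{Lem:partial-2-equation}, kill all terms by irreducibility except $i_2((A) \circ A^{i-1}a^iA^i)$, and identify that term with $X_i^\star$. The only difference is that you verify the last identification explicitly via \eqref{Eq:i-n-definition} (checking the correction term $i_2(i_1(\partial_1(A)\circ A^{i-1}a^iA^i))$ vanishes), whereas the paper asserts it without computation; your verification is accurate.
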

\begin{proof}
We have, by definition of $\partial_3$ as in \eqref{Eq:partial_n-definition}, that 
\begin{align*}
\partial_3(X_i, A^i) &= X_i \circ A^i - i_2 \partial_2 ( X_i \circ A^i) \\
&= X_i \circ A^i - i_2 \left( (a) \circ \sum_{s=0}^{i-1} \overline{a^s A^i a^i A^i} \right)  - i_2 \left( (A) \circ \sum_{s=0}^{i-1} \overline{A^s a^i A^i} \right) 
\end{align*}
using Lemma~\ref{Lem:partial-2-equation} and $\Z$-linearity of $i_2$. Now $\overline{a^s A^i a^i A^i} = a^s A^i$ is irreducible for all $s \geq 0$, and hence the first $i_2$-term vanishes identically. The term $A \cdot A^s a^i A^i$ is irreducible except when $s = i-1$, so the second term reduces to computing $i_2( (A)\circ A^{i-1} a^i A^i) = X_i^\star$. This yields the lemma. 
\end{proof}

\begin{lemma}\label{Lem:partial3-type2-tuples}
For all tuples $(X_i, A^j a^j)$ of type II, we have 
\[
\partial_3(X_i, A^j a^j) = X_i \circ (A^j a^j - 1).
\]
\end{lemma}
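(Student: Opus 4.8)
The plan is to unwind the definition \eqref{Eq:partial_n-definition} of $\partial_3$ and then exploit the homotopy identity $\partial i + i\partial = \operatorname{id}$ to identify the relevant $i_2$-term without computing $i_2$ from scratch. First I would write
\[
\partial_3(X_i, A^j a^j) = X_i \circ A^j a^j - i_2\left(\partial_2(X_i) \circ A^j a^j\right),
\]
so that the whole content of the lemma reduces to the claim that $i_2\left(\partial_2(X_i)\circ A^j a^j\right) = X_i$.

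The key step is to show that right multiplication by $A^j a^j$ fixes $\partial_2(X_i)$, i.e.\ $\partial_2(X_i) \circ A^j a^j = \partial_2(X_i)$ as an element of $P_1$. By Lemma~\ref{Lem:partial-2-equation} this amounts to computing the irreducible forms of $a^s A^i a^i A^j a^j$ and $A^s a^i A^j a^j$ for $0 \leq s \leq i-1$. Here the hypothesis $j < i$ (so in particular $j \leq i$) is crucial: in $\FIS$ one has $a^i A^j a^j = a^{i-j}(a^j A^j a^j) = a^{i-j}a^j = a^i$. Substituting this identity gives $\overline{a^s A^i a^i A^j a^j} = \overline{a^s A^i a^i} = a^s A^i a^i$ and $\overline{A^s a^i A^j a^j} = \overline{A^s a^i} = A^s a^i$, where irreducibility of the right-hand sides follows from $s < i$, so that no window $a^n A^n a^n$ with the middle block $A^i$ taken in full can appear. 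These are precisely the words occurring in $\partial_2(X_i)$, so the two expressions agree term by term and $\partial_2(X_i)\circ A^j a^j = \partial_2(X_i)$ as claimed.

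With this in hand, $\Z$-linearity of $i_2$ gives $i_2\left(\partial_2(X_i)\circ A^j a^j\right) = i_2\left(\partial_2(X_i)\right)$, and it remains only to identify $i_2(\partial_2(X_i))$ with $X_i$. I would do this via the homotopy identity on $P_2$, namely $\partial_3 i_3(X_i) + i_2\partial_2(X_i) = X_i$. Since $X_i = X_i \circ 1$ and its last coordinate $a^{i-1}A^i a^i$ is already irreducible, the defining formula \eqref{Eq:i-n-definition} for $i_3$ gives $i_3(X_i) = 0$; hence $i_2(\partial_2(X_i)) = X_i$. Combining the three displays yields
\[
\partial_3(X_i, A^j a^j) = X_i \circ A^j a^j - X_i = X_i \circ (A^j a^j - 1),
\]
as required.

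I expect the main obstacle to be the reduction bookkeeping in the middle paragraph: verifying that $a^s A^i a^i A^j a^j$ and $A^s a^i A^j a^j$ collapse exactly to $a^s A^i a^i$ and $A^s a^i$, checking that only the rewrite $a^i A^j a^j \to a^i$ is triggered (using $j<i$) and that the resulting words are genuinely $\cR_1$-irreducible (using $s<i$). The final identification $i_2(\partial_2(X_i)) = X_i$ is clean once one observes $i_3(X_i) = 0$; the alternative of evaluating $i_2$ directly from \eqref{Eq:i-n-definition} term by term is possible but considerably messier, and I would avoid it.
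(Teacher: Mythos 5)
Your proof is correct. Its first half is the same as the paper's: both unwind \eqref{Eq:partial_n-definition} and both hinge on the identical $\cR_1$-reduction bookkeeping, namely that $j<i$ forces $\overline{a^s A^i a^i A^j a^j} = a^s A^i a^i$ and $\overline{A^s a^i A^j a^j} = A^s a^i$ for $0 \leq s \leq i-1$. The endgame differs. The paper evaluates $i_2$ term by term on these reduced basis elements of $P_1$: every term vanishes because left multiplication by $a$ (resp.\ $A$) keeps the word irreducible, except $(a) \circ a^{i-1}A^i a^i$, for which the paper asserts $i_2((a)\circ a^{i-1}A^ia^i) = X_i$ --- an evaluation that, strictly speaking, also needs the recursive correction term in \eqref{Eq:i-n-definition} to vanish (it does, by the same irreducibility observations, but this is left unsaid). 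You instead package the reductions as the statement that the right action of $A^ja^j$ fixes $\partial_2(X_i)$, and then extract $i_2(\partial_2(X_i)) = X_i$ from the contracting-homotopy identity $\partial_3 i_3 + i_2\partial_2 = \operatorname{id}$ on $P_2$, using $i_3(X_i) = i_3(X_i \circ 1) = 0$ because the last coordinate $a^{i-1}A^ia^i$ is irreducible. This is a genuinely different, and arguably cleaner, finish: it requires no evaluation of $i_2$ on any individual cell (hence no correction-term check), at the price of invoking the global fact that $i_\ast$ is a chain homotopy, which the paper states but defers to Kobayashi. Note that your device also recovers Lemma~\ref{Lem:partial3-type3-tuples}, since there $\partial_2(X_i)\circ a^jA^ka^k = \partial_2(X_i \circ a^j)$ and $i_3(X_i\circ a^j) = 0$; but it does not apply verbatim to the type I tuples of Lemma~\ref{Lem:partial3-type1-tuples}, where the action of $A^i$ genuinely changes $\partial_2(X_i)$ and the extra term $X_i^\star$ appears.
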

\begin{proof}
By definition of $\partial_3$ as in \eqref{Eq:partial_n-definition}, we have
\begin{align*}
\partial_3(X_i, A^j a^j) &= X_i \circ A^j a^j - i_2 \partial_2(X_i \circ A^j a^j) \\
&= X_i \circ A^j a^j - i_2 \left( (a) \circ \sum_{s=0}^{i-1} \overline{a^s A^i a^i A^j a^j} \right)  - i_2 \left( (A) \circ \sum_{s=0}^{i-1} \overline{A^s a^i A^j a^j} \right),
\end{align*}
where we have used Lemma~\ref{Lem:partial-2-equation}. Since $j < i$, we have that $\overline{a^s A^i a^i A^j a^j} = a^s A^i a^i$ and $\overline{A^s a^i A^j a^j} = A^s a^i$. Furthermore, for $0 \leq s \leq i-1$, these words remain irreducible when left multiplied by $a$ (resp.\ $A$), except the first when $s=i-1$. Thus, all terms in the argument of $i_2$ vanish except for $i_2((a) \circ a^{i-1} A^i a^i) = X_i$. This yields the lemma. 
\end{proof}

\begin{lemma}\label{Lem:partial3-type3-tuples}
For all tuples $(X_i, a^j A^k a^k)$ of type III, we have 
\[
\partial_3(X_i, a^j A^k a^k) = X_i \circ (a^j A^k a^k - a^j).
\]
\end{lemma}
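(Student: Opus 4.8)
The plan is to compute $\partial_3(X_i, a^j A^k a^k)$ directly from the recursive definition \eqref{Eq:partial_n-definition}, exactly as in the proofs of Lemmas~\ref{Lem:partial3-type1-tuples} and \ref{Lem:partial3-type2-tuples}. Writing out the formula gives
\[
\partial_3(X_i, a^j A^k a^k) = X_i \circ a^j A^k a^k - i_2 \partial_2 \left( X_i \circ a^j A^k a^k \right),
\]
so after substituting the expression for $\partial_2(X_i)$ from Lemma~\ref{Lem:partial-2-equation} and using $\Z$-linearity of $i_2$, the task reduces to evaluating the two sums $i_2\bigl( (a) \circ \sum_{s=0}^{i-1} \overline{a^s A^i a^i \cdot a^j A^k a^k} \bigr)$ and $i_2\bigl( (A) \circ \sum_{s=0}^{i-1} \overline{A^s a^i \cdot a^j A^k a^k} \bigr)$. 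The bulk of the work will be the normal-form bookkeeping: I would first reduce each word $a^s A^i a^i a^j A^k a^k$ and $A^s a^i a^j A^k a^k$ under $\cR_1$, keeping careful track of the constraints $1 \leq j < k \leq i+j$ that define type III.

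First I would handle the second family. The word $A^s a^i a^j A^k a^k = A^s a^{i+j} A^k a^k$; since $k \leq i+j$ the factor $a^{i+j} A^k a^k$ is irreducible (it is only reducible when the exponents match, i.e.\ $k = i+j$ would still leave $a^{i+j}A^ka^k$ irreducible as the outer block is longer), so these words are irreducible and, after left-multiplication by $A$, one checks whether the prefix $A \cdot A^s a^{i+j} A^k a^k$ ever creates a reducible minimal prefix. Because $A^{s+1} a^{i+j}$ has distinct, increasing-then-constant exponent structure and $s+1 \leq i$, no left-hand side $A^n a^n A^n$ of $\cR_1$ appears, so $i_2$ of every term in this second sum vanishes. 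For the first family I would reduce $a^s A^i a^i a^j A^k a^k$; here the interaction between $a^i a^j = a^{i+j}$ and the trailing $A^k a^k$ is the delicate point, and the hypothesis $j < k \leq i+j$ is precisely what controls which reductions fire.

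I expect the computation to collapse so that the only surviving $i_2$-contributions are $X_i \circ a^j$ (coming from the term that reduces back to the original type-II-like right factor) together with the leading term $X_i \circ a^j A^k a^k$, yielding $\partial_3(X_i, a^j A^k a^k) = X_i \circ (a^j A^k a^k - a^j)$ as claimed. The analogy with Lemma~\ref{Lem:partial3-type2-tuples} is the guiding heuristic: there the right factor $A^j a^j$ reduced away entirely against $a^i$, contributing $-X_i$, and here the longer right factor $a^j A^k a^k$ should reduce to its prefix $a^j$, contributing $-X_i \circ a^j$.

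The main obstacle will be verifying the normal forms in the first sum cleanly across the full parameter range $1 \leq j < k \leq i+j$, in particular confirming that after computing $\overline{a^s A^i a^i a^j A^k a^k}$ and left-multiplying by $a$, the only reducible minimal prefix arises from a single term (analogous to the $s = i-1$ case in the earlier lemmas) and that $i_2$ applied to it returns exactly $X_i \circ a^j$ rather than some more complicated combination of lower tuples. I would isolate this as the one case requiring genuine care, and dispatch the remaining terms by the same irreducibility arguments used in Lemmas~\ref{Lem:partial3-type1-tuples} and \ref{Lem:partial3-type2-tuples}.
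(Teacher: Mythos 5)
Your overall strategy coincides with the paper's: expand $\partial_3$ via \eqref{Eq:partial_n-definition}, substitute Lemma~\ref{Lem:partial-2-equation}, compute normal forms of the resulting words, and argue that all $i_2$-terms vanish except a single one. However, the one reduction you actually carry out contains a genuine error. You claim that for $k \leq i+j$ the word $a^{i+j}A^k a^k$ is irreducible, being ``only reducible when the exponents match.'' This is backwards: the rules of $\cR_1$ apply to arbitrary factors, not to matched blocks, and since $k \leq i+j$ the word $a^{i+j}A^k a^k$ contains the left-hand side $a^k A^k a^k$ as a suffix (using the last $k$ letters of the block $a^{i+j}$), hence reduces to $a^{i+j}$. (Your parenthetical about the case $k = i+j$ is also wrong: in that case the word literally \emph{is} a left-hand side.) The correct normal forms, as in the paper, are $\overline{A^s a^i a^j A^k a^k} = A^s a^{i+j}$ and $\overline{a^s A^i a^i a^j A^k a^k} = a^s A^i a^{i+j}$; the absorption of the trailing $A^k a^k$ is exactly where the type-III hypothesis $k \leq i+j$ enters, and it is the crux of the lemma. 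Your conclusion that the second family contributes nothing happens to survive (because $A^{s+1} a^{i+j}$ is irreducible), but for the wrong reason.

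The error is not harmless, because it propagates into the first family, precisely the part you defer with ``I expect the computation to collapse.'' Note that $i_2$ is only defined on elements $(v_1) \circ x$ with $x$ irreducible, so the normal form must be computed \emph{before} the minimal-reducible-prefix analysis. With the correct normal form $a^{i-1}A^i a^{i+j}$ (the case $s = i-1$), the word $a \cdot a^{i-1}A^i a^{i+j}$ has minimal reducible prefix $a^i A^i a^i$, leaving remainder $y = a^j$, so by \eqref{Eq:i-n-definition} the surviving term is $X_i \circ a^j$ (after checking that the correction term vanishes), which gives the claimed formula. If instead one feeds in the unreduced word $a^{i-1}A^i a^{i+j}A^k a^k$, the remainder would be $y = a^j A^k a^k$, and the surviving term would come out as $X_i \circ a^j A^k a^k$, which cancels the leading term and cannot produce $X_i \circ a^j$ at all. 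So both the step you executed and the step you deferred require the correct reducibility analysis; as written, the proposal does not establish the lemma.
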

\begin{proof}
As in the previous lemmas, we have 
\begin{align*}
\partial_3(X_i, a^j A^k a^k)&= X_i \circ a^j A^k a^k - i_2 \partial_2 ( X_i \circ a^j A^k a^k)
\end{align*}
and by Lemma~\ref{Lem:partial-2-equation} we have 
\[
i_2 \partial_2 ( X_i \circ a^j A^k a^k) = i_2 \left( (a) \circ \sum_{s=0}^{i-1} \overline{a^s A^i a^i a^j A^k a^k} \right)  - i_2 \left( (A) \circ \sum_{s=0}^{i-1} \overline{A^s a^i a^j A^k a^k} \right) 
\]
Since $i+j \geq k$, we have that $\overline{a^s A^i a^i a^j A^k a^k} = a^s A^i a^{i+j}$ and $\overline{A^s a^i a^j A^k a^k} = A^s a^{i+j}$. As in the previous two lemmas, irreducibility now implies that all $i_2$-terms vanish except one: the first, when $s=i-1$. Thus, we have
\[
i_2\partial_2(X_i \circ a^j A^k a^k) = i_2\left( (a) \circ a^{i-1} A^i a^{i+j} \right) = X_i \circ a^j,
\]
where the last equality is a routine computation. This yields the lemma. 
\end{proof}

Apply $- \otimes_{\Z\FIS} \Z$ to the Kobayashi resolution $(P_\ast, \partial_\ast)$, and denote the resulting complex by $(\widetilde{P}_\ast, \widetilde{\partial}_\ast)$. We can now compute the second homology group of $\FIS$. 

\begin{proposition}\label{Prop:H2-computation}
The monogenic free regular $\star$-monoid $\FIS$ has $H_2(\FIS, \Z) \cong \Z^\infty$. Consequently, $\FIS$ does not have the homological finiteness property $\FP_2$. 
\end{proposition}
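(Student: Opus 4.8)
The goal is to show $H_2(\FIS,\Z) \cong \Z^\infty$ and deduce the failure of $\FP_2$. The plan is to read off $H_2$ directly from the chain complex $(\widetilde P_\ast, \widetilde\partial_\ast)$ obtained by applying $-\otimes_{\Z\FIS}\Z$ to the Kobayashi resolution, using the explicit descriptions of $\partial_2$ and $\partial_3$ from Lemmas~\ref{Lem:partial-2-equation}--\ref{Lem:partial3-type3-tuples}. The crucial simplification is that tensoring with $\Z$ over $\Z\FIS$ collapses the right $\FIS$-action: each basis element $v \circ m$ is sent to its underlying tuple $v$ (equivalently, every group-ring coefficient is sent to its augmentation). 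Thus $\widetilde P_2$ is free abelian on the basis $\{X_i, X_i^\star : i\geq 1\}$, and $\widetilde P_3$ is free abelian on the tuples of types I, II, III and their starred mirrors. I would first compute $\widetilde\partial_2$ and $\widetilde\partial_3$ on these bases by applying the augmentation $m \mapsto 1$ to the formulas already established.

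First I would record that under augmentation Lemma~\ref{Lem:partial-2-equation} gives $\widetilde\partial_2(X_i) = i\cdot(a) + i\cdot(A)$ (each sum has $i$ terms, all coefficients becoming $1$), and symmetrically $\widetilde\partial_2(X_i^\star) = i\cdot(A) + i\cdot(a)$; in particular $\widetilde\partial_2(X_i) = \widetilde\partial_2(X_i^\star)$ for every $i$. Hence $\ker\widetilde\partial_2$ already contains every difference $X_i - X_i^\star$, and more generally one computes its rank. Next I would augment the $\partial_3$ formulas: Lemma~\ref{Lem:partial3-type1-tuples} gives $\widetilde\partial_3(X_i,A^i) = X_i - X_i^\star$, while Lemmas~\ref{Lem:partial3-type2-tuples} and~\ref{Lem:partial3-type3-tuples} give $\widetilde\partial_3(X_i,A^ja^j) = X_i\cdot(1-1) = 0$ and $\widetilde\partial_3(X_i, a^jA^ka^k) = X_i\cdot(1-1) = 0$, since in each case the two group-ring terms have equal augmentation. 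The starred tuples behave symmetrically, the type-I$^\star$ tuples mapping to $X_i^\star - X_i$.

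With these computations in hand, $H_2 = \ker\widetilde\partial_2 / \im\widetilde\partial_3$ is read off directly. The image $\im\widetilde\partial_3$ is exactly the subgroup generated by $\{X_i - X_i^\star : i\geq 1\}$ (types II, III and their stars contribute nothing, and types I, I$^\star$ contribute only these differences up to sign). So I would then identify $\ker\widetilde\partial_2$: since $\widetilde\partial_2$ sends both $X_i$ and $X_i^\star$ to $i(a)+i(A)$, the kernel is spanned by the differences $X_i - X_i^\star$ together with elements coming from $\Z$-linear relations among the vectors $i(a)+i(A)$ across different $i$. The key point is that after quotienting by $\im\widetilde\partial_3$ (which kills precisely the differences $X_i-X_i^\star$), the surviving classes are indexed by an infinite set, yielding a free abelian group of infinite rank $\Z^\infty$. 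I expect the main obstacle to be the careful bookkeeping of $\ker\widetilde\partial_2$: one must verify that no further unexpected relations collapse the quotient, i.e.\ that the classes genuinely remain linearly independent in the quotient and are infinite in number, rather than being identified by the image. This amounts to a clean rank count which I would organize by fixing, say, the classes of the $X_i$ (one per $i$) as representatives and checking they are independent modulo $\im\widetilde\partial_3$ and modulo the single nontrivial constraint imposed by $\widetilde\partial_2$.

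Finally, the consequence for $\FP_2$ is immediate and standard. A monoid of type $\FP_2$ has $H_2(-,\Z)$ of finite rank (indeed finitely generated), since $\FP_2$ provides a partial free resolution with finitely generated $P_0, P_1, P_2$, forcing $H_2$ to be a subquotient of a finitely generated abelian group. Since $H_2(\FIS,\Z)\cong\Z^\infty$ is not finitely generated, $\FIS$ cannot be of type $\FP_2$. I would state this last implication in one sentence, citing the fact that $\FP_2$ implies finite generation of $H_2$.
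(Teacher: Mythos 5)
Your proposal is correct and follows essentially the same route as the paper: augment the Kobayashi chain maps from Lemmas~\ref{Lem:partial-2-equation}--\ref{Lem:partial3-type3-tuples}, observe that $\widetilde{\partial}_2$ sends both $X_i$ and $X_i^\star$ to $i(a)+i(A)$ while $\widetilde{\partial}_3$ kills types II/III and sends type I (resp.\ I$^\star$) tuples to $\pm(X_i - X_i^\star)$, and conclude $H_2 = \ker\widetilde{\partial}_2/\im\widetilde{\partial}_3 \cong \Z^\infty$. Your explicit justification of the $\FP_2$ consequence (a subquotient of a finitely generated abelian group is finitely generated) is the standard argument the paper leaves implicit, and the kernel bookkeeping you flag is handled in the paper by exhibiting the basis $\{X_i - X_i^\star\} \cup \{X_i - iX_1\}$ of $\ker\widetilde{\partial}_2$, exactly as your sketch suggests.
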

\begin{proof}
By Lemma~\ref{Lem:partial-2-equation}, we have that $\ker \widetilde{\partial}_2$ is freely generated as a $\Z\FIS$-module by $\{ X_i - i X_1 \mid i \geq 1 \}$ and $\{ X_i - X_i^\star \mid i \geq 1 \}$, so $\ker \widetilde{\partial}_2$ is free abelian of countably infinite rank. By Lemmas~\ref{Lem:partial3-type1-tuples}, \ref{Lem:partial3-type2-tuples} and \ref{Lem:partial3-type3-tuples}, we have that $\widetilde{\partial}_3$ maps all basis elements of $\widetilde{P}_3$ to $0$ except those corresponding to type I tuples, in which case $\widetilde{\partial_3}( (X_i,A^i) \otimes_{\Z \FIS} 1)$ is just $(X_i - X_i^\star) \otimes_{\Z \FIS} 1$. Thus $\im \widetilde{\partial}_3$ has a basis $\{ (X_i - X_i^\star) \otimes_{\Z \FIS} 1 \mid i \geq 1\}$, so $\Tor_{2}^{\Z \FIS}(\Z, \Z) \cong \Z^\infty$, with a basis given by $\{ (X_i \otimes_{\Z \FIS} 1) +  \im \widetilde{\partial}_3  \mid i \geq 1\}$. 
\end{proof}

\begin{remark}
The above computations show that if we instead consider a ``bounded'' version $\FIS(n)$ of $\FIS$ in which we for some $n \geq 1$ only include the relations $a^i (a^\star)^i a^i = a^i$ and $(a^\star)^i a^i (a^\star)^i = (a^\star)^i$ for $1 \leq i \leq n$, then we have $H_2(\FIS(n), \Z) \cong \Z^{n-1}$. 
\end{remark}

\subsection{Collapsing scheme and higher homology groups} For brevity of notation, throughout this section given any basis element $v \in V^{(n)}$, we will also write $v$ for $v \otimes_{\ZM} 1$, as context makes this unambiguous. The basis for $\widetilde{P}_n$ will thus also be denoted $V^{(n)}$, and its elements will be referred to as cells. In this section, we will construct a matching function $\theta \colon \widetilde{P}_\ast \to \widetilde{P}_{\ast + 1}$ such that all basis elements in dimension $3$ are either redundant or collapsible. This will immediately imply that $H_k\FIS, \Z) = 0$ for $k \geq 3$. For ease of notation, throughout we will often suppress parentheses, e.g.\ substituting $\theta(x,y)$ for $\theta((x,y))$. 

For cells of dimension $n=1$, we set $\theta(a) = \theta(A) = 0$, and hence both such cells will be critical. For $n=2$, we set $\theta(X_i) = 0$ for all $i \geq 1$, and $\theta(X_i^\star) = (X_i^\star, a^i)$. Note that $\theta(X_i^\star)$ is thus a cell of type I$^\star$, and all such cells lie in the image of $\theta$. We now define $\theta$ when $n=3$. First, for cells of type I$^\star$, since $(X_i^\star, a^i) \in \im \theta$ we must have $(X_i^\star, a^i)$ be collapsible for all $i \geq 1$, so set $\theta(X_i^\star, a^i) = 0$. For the cells of type I, we set $\theta(X_i, A^i) = -(X_i, A^i, a^i A^i)$, the latter clearly being an element of $V^{(4)}$ since $(A^i, a^i A^i) \in \cE$. We thus need to have $(X_i, A^i, a^i A^i)$ to be collapsible, i.e.\ $\theta(X_i, A^i, a^iA^i) = 0$ and furthermore the coefficient of $(X_i, A^i)$ in $(\widetilde{\partial}_4 \theta)(X_i, A^i, a^i A^i)$ has to be $-1$. We now prove this is the case.

\begin{lemma}\label{Lem:theta-on-typeI-is-good}
For all $i \geq 1$, we have 
\[
\partial_4(X_i, A^i, a^i A^i) = (X_i, A^i) \circ a^i A^i + (X_i^\star, a^i) \circ A^i,
\]
and consequently $\widetilde{\partial}_4(X_i, A^i, a^i A^i) = \underbrace{(X_i, A^i)}_{\textnormal{redundant}} + \underbrace{(X_i^\star, a^i)}_{\textnormal{collapsible}}$. In particular, $(X_i, A^i)$ appears with the coefficient $-1$ in $(\widetilde{\partial}_4 \theta)(X_i, A^i)$.
\end{lemma}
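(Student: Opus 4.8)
The plan is to compute $\partial_4(X_i, A^i, a^i A^i)$ directly from the inductive formula \eqref{Eq:partial_n-definition}, specialised to $n+1 = 4$, and then reduce modulo the augmentation ideal to obtain $\widetilde{\partial}_4$. Writing $(v_1, v_2, v_3) = (X_i, A^i, a^i A^i)$ (where we unabbreviate $X_i = (a, a^{i-1}A^i a^i)$ when needed), the formula gives
\begin{equation*}
\partial_4(X_i, A^i, a^i A^i) = (X_i, A^i) \circ a^i A^i - i_3\left( \partial_3(X_i, A^i) \circ a^i A^i \right).
\end{equation*}
The first summand is already in the desired form. The work is entirely in the second summand, and the key input is Lemma~\ref{Lem:partial3-type1-tuples}, which tells us that $\partial_3(X_i, A^i) = X_i \circ A^i - X_i^\star$. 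So I would first expand
\begin{equation*}
\partial_3(X_i, A^i) \circ a^i A^i = X_i \circ \overline{A^i a^i A^i} - X_i^\star \circ \overline{a^i A^i},
\end{equation*}
using that the right $\ZM$-action factors through irreducible normal forms.

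Next I would evaluate $i_3$ on each of these two terms using its definition \eqref{Eq:i-n-definition}. For the term $X_i^\star \circ \overline{a^i A^i} = X_i^\star \circ a^i A^i$, the word $a^i A^i$ is irreducible and, crucially, the last component of $X_i^\star$ is $A^{i-1} a^i A^i$, so I must check whether $(A^{i-1}a^i A^i)(a^i)$ begins with a reducible prefix; tracking this overlap carefully is what produces the $i_3$-contribution. For the term $X_i \circ \overline{A^i a^i A^i}$, note $\overline{A^i a^i A^i} = A^i$ by $\mathcal{R}_1$, so this becomes $X_i \circ A^i$, whose last-component overlap $(a^{i-1}A^i a^i)(A^i)$ I would similarly analyse; since $a^{i-1}A^i a^i A^i$ reduces to $a^{i-1}A^i$, the relevant prefix is irreducible and this term's $i_3$-image should vanish. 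The expected net result of these two $i_3$-computations is precisely the single term $-(X_i^\star, a^i)\circ A^i$, matching the claimed formula after accounting for the sign in front of $i_3$.

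The main obstacle will be the bookkeeping in the $i_3$-evaluation: \eqref{Eq:i-n-definition} is recursive, invoking $i_3$ and $i_2$ again on lower-weight terms arising from $i_2(\partial_2(\cdots)\circ v_3)$, so I must confirm that all the secondary recursive contributions cancel or vanish, leaving only the leading tuple $(X_i^\star, a^i)$. I would argue this by the same irreducibility observations used in Lemmas~\ref{Lem:partial3-type2-tuples} and~\ref{Lem:partial3-type3-tuples}: once the overlaps are pinned down, most candidate terms are irreducible after right multiplication and hence killed by $i_2$, so the recursion terminates immediately at depth one. Finally, to pass to $\widetilde{\partial}_4$, I apply $-\otimes_{\Z\FIS}\Z$, which sends each $v \circ m$ to $v \otimes 1$ regardless of $m$; this collapses $(X_i, A^i)\circ a^i A^i$ to $(X_i, A^i)$ and $(X_i^\star, a^i)\circ A^i$ to $(X_i^\star, a^i)$, yielding $\widetilde{\partial}_4(X_i, A^i, a^i A^i) = (X_i, A^i) + (X_i^\star, a^i)$. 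Since $\theta(X_i, A^i) = -(X_i, A^i, a^i A^i)$, applying $\widetilde{\partial}_4$ then gives $-(X_i, A^i) - (X_i^\star, a^i)$, so $(X_i, A^i)$ indeed appears in $(\widetilde{\partial}_4\theta)(X_i, A^i)$ with coefficient $-1$, verifying condition~(3) of the collapsing scheme as required.
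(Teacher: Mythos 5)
Your setup coincides with the paper's: expanding via \eqref{Eq:partial_n-definition} and Lemma~\ref{Lem:partial3-type1-tuples}, and using $\overline{A^i a^i A^i} = A^i$, one gets
\[
\partial_4(X_i, A^i, a^iA^i) = (X_i,A^i)\circ a^iA^i - i_3\left(X_i\circ A^i\right) + i_3\left(X_i^\star \circ a^iA^i\right).
\]
But your disposal of the middle term is wrong, and the error comes from misreading \eqref{Eq:i-n-definition}: $i_3((v_1,v_2)\circ x)$ vanishes precisely when the \emph{word} $v_2x$ is irreducible, whereas the fact that $v_2x$ ``reduces to'' something shorter is exactly the opposite situation. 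Here $v_2x = a^{i-1}A^ia^i\cdot A^i$ contains the left-hand side $A^ia^iA^i$ of $\mathcal{R}_1$, so it is reducible; its minimal reducible prefix is the whole word (take $v_4 = A^i$, $y=1$), and unwinding \eqref{Eq:i-n-definition} (the recursive correction is $i_3(i_2(\partial_2(X_i)\circ A^i)\circ 1) = i_3(X_i^\star\circ 1) = 0$) gives
\[
i_3(X_i\circ A^i) = (X_i,A^i) \neq 0.
\]

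Symmetrically, your claim that the recursion for $i_3(X_i^\star\circ a^iA^i)$ ``terminates immediately at depth one,'' leaving only the leading tuple, is also false: by \eqref{Eq:i-n-definition} and the $\star$-dual of Lemma~\ref{Lem:partial-2-equation}, inside $i_2(\partial_2(X_i^\star)\circ a^i)$ only the term $(a)\circ a^{i-1}A^ia^i$ survives the irreducibility check, so that $i_2(\partial_2(X_i^\star)\circ a^i) = X_i$ and hence
\[
i_3(X_i^\star\circ a^iA^i) = (X_i^\star,a^i)\circ A^i + i_3(X_i\circ A^i).
\]
The entire point of the paper's proof is that this non-zero recursive term cancels against the $-i_3(X_i\circ A^i)$ coming from the other summand; no vanishing statement is needed, and none is true. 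Your two errors happen to compensate, so your displayed answer agrees with the lemma, but the argument as written does not prove it: keeping your (false) vanishing claim while computing $i_3(X_i^\star\circ a^iA^i)$ honestly would produce the extra term $(X_i,A^i)$, hence $\widetilde{\partial}_4(X_i,A^i,a^iA^i) = 2(X_i,A^i) + (X_i^\star,a^i)$ and a coefficient of $-2$ in $(\widetilde{\partial}_4\theta)(X_i,A^i)$, violating condition~(3) of a collapsing scheme. The final step of your proposal (applying $-\otimes_{\Z\FIS}\Z$ and reading off the coefficient $-1$ from $\theta(X_i,A^i) = -(X_i,A^i,a^iA^i)$) is fine once the formula for $\partial_4$ is actually established.
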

\begin{proof}
By definition of $\partial_4$ and using the computation of $\partial_3$ on tuples of type I in Lemma~\ref{Lem:partial3-type1-tuples}, we have 
\begin{align*}
\partial_4(X_i, A^i, a^i A^i) &= (X_i, A^i) \circ a^i A^i - i_3 \partial_3\left( (X_i, A^i) \circ a^i A^i \right) \\
&= (X_i, A^i) \circ a^i A^i - i_3 \left( (X_i \circ A^i - X_i^\star) \circ a^i A^i \right)  \\
&= (X_i, A^i) \circ a^i A^i - i_3 \left( X_i \circ A^i \right) + i_3 \left( X_i^\star \circ a^i A^i \right),
\end{align*}
where we have used the fact that $\overline{A^i a^i A^i} = A^i$. We shall now see that there is plenty of cancellation in the terms of the argument of the $i_3$; indeed, we have 
\begin{align*}
i_3(X_i^\star \circ a^i A^i) &= (X_i^\star, a^i) \circ A^i + i_3 \left( i_2 \partial_2 \left( X_i^\star \circ a^i \right) \circ A^i \right)  \\
&= (X_i^\star, a^i) \circ A^i + i_3 \left( i_2 \left( \sum_{s=0}^{i-1} (A) \circ A^s a^i + \sum_{s=0}^{i-1} (a) \circ a^s A^i a^i \right) \circ A^i \right)  \\
&= (X_i^\star, a^i) \circ A^i + i_3 \left( i_2( (a) \circ a^{i-1}A^i a^i) \circ A^i\right) \\
&= (X_i^\star, a^i) \circ A^i + i_3(X_i \circ A^i),
\end{align*}
where the second equality follows from (the $\star$-dual of) Lemma~\ref{Lem:partial-2-equation} and rewriting the coefficients of $(A)$ and $(a)$; and the third equality follows from noting that whenever all $0 \leq s \leq i-1$, the word $A \cdot A^s a^i$ is always irreducible, and $a \cdot a^s A^i a^i$ is irreducible if and only if $s=i-1$. Thus we have cancellation of the $i_3(X_i \circ A^i)$-terms in the above expression for $\partial_4(X_i, A^i, a^i A^i)$, and we are left with the desired expression. 
\end{proof}

Thus we have extended our collapsing scheme in a consistent manner to the cells of type I and I$^\star$ in dimension $3$. All remaining cells in dimension $3$ will be chosen to be reducible. We proceed with the cells of type II and II$^\star$. For a given cell $(X_i, A^j a^j)$ of type II resp.\ a cell $(X_i^\star, a^j A^j)$ of type II$^\star$, where $1 \leq j < i$, we will set 
\begin{align*}
\theta(X_i, A^j a^j) &= -(X_i, A^j a^j, A^j), \\
\theta(X_i^\star, a^j A^j) &= -(X_i^\star, a^j A^j, a^j).
\end{align*}
We prove this assignment is consistent with setting $(X_i, A^j a^j, A^j)$ resp.\ $(X_i^\star, a^j A^j, a^j)$ to be collapsible. 

\begin{lemma}\label{Lem:theta-on-typeII-is-good}
For all $1 \leq j < i$, we have 
\[
\partial_4(X_i, A^j a^j, A^j) = (X_i, A^j a^j) \circ A^j,
\]
and hence $(X_i, A^j a^j)$ appears with the coefficient $-1$ in $\widetilde{\partial}_4 \theta (X_i, A^j a^j) = -(X_i, A^j a^j)$. 
\end{lemma}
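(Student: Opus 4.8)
The plan is to compute $\partial_4(X_i, A^j a^j, A^j)$ directly from the inductive definition \eqref{Eq:partial_n-definition}, exactly as in the proof of Lemma~\ref{Lem:theta-on-typeI-is-good}, and to show that the correction term $i_3\partial_3$ vanishes. First I would write
\[
\partial_4(X_i, A^j a^j, A^j) = (X_i, A^j a^j) \circ A^j - i_3 \partial_3\left( (X_i, A^j a^j) \circ A^j \right),
\]
and then invoke the computation of $\partial_3$ on type II tuples from Lemma~\ref{Lem:partial3-type2-tuples}, which gives $\partial_3(X_i, A^j a^j) = X_i \circ (A^j a^j - 1)$. Hence the argument of $i_3$ becomes
\[
i_3\left( X_i \circ (A^j a^j - 1) \circ A^j \right) = i_3\left( X_i \circ A^j a^j A^j \right) - i_3\left( X_i \circ A^j \right).
\]
Since $j < i$, both $A^j a^j A^j$ and $A^j$ are already $\mathcal{R}_1$-irreducible (the reducing subword $A^j a^j A^j$ requires an exponent match, but it \emph{is} a left-hand side of $\mathcal{R}_1$, so I must take the normal form $\overline{A^j a^j A^j} = A^j$). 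This is the crux: the two terms $i_3(X_i \circ \overline{A^j a^j A^j})$ and $i_3(X_i \circ A^j)$ coincide after reduction and cancel, leaving the correction term equal to zero.

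The main obstacle is verifying that this cancellation genuinely occurs, i.e.\ that reducing $A^j a^j A^j$ to $A^j$ inside the first $i_3$-term produces \emph{exactly} the second $i_3$-term with no residual contribution from the reduction step itself. The subtlety is that $i_3$ is defined on basis elements $(v_1, v_2) \circ x$ of $\widetilde{P}_3$ (equivalently $P_3$) by the recursive formula \eqref{Eq:i-n-definition}, and $X_i \circ A^j a^j A^j$ is a product where the \emph{right factor} $A^j a^j A^j$ is reducible only as a standalone word, but the element $X_i \circ A^j a^j A^j$ has $x = A^j a^j A^j$ treated as an irreducible coefficient in $M = \FIS$; so I would first replace it by its normal form $\overline{A^j a^j A^j} = A^j$ before applying $i_3$, since $P_3$ is a module over $\Z\FIS = \Z\Irr(\mathcal{R}_1)$. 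Once this identification $X_i \circ \overline{A^j a^j A^j} = X_i \circ A^j$ is made at the level of $\Z\FIS$-module elements, the two $i_3$-terms are literally equal and cancel.

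Therefore the correction term vanishes identically and $\partial_4(X_i, A^j a^j, A^j) = (X_i, A^j a^j) \circ A^j$, as claimed. Applying $-\otimes_{\Z\FIS}\Z$ collapses the coefficient $A^j$ to $1$, giving $\widetilde{\partial}_4(X_i, A^j a^j, A^j) = (X_i, A^j a^j)$; combined with the sign in the definition $\theta(X_i, A^j a^j) = -(X_i, A^j a^j, A^j)$, this shows that $(X_i, A^j a^j)$ appears with coefficient $-1$ in $(\widetilde{\partial}_4 \theta)(X_i, A^j a^j)$, verifying axiom (3) of a collapsing scheme for these cells. By the $\star$-duality already used throughout (swapping $a \leftrightarrow A$), the identical computation handles the type II$^\star$ cells $(X_i^\star, a^j A^j)$, so I would only state that case by symmetry rather than repeat the calculation.
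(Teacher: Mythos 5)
Your proof is correct and takes essentially the same route as the paper: expand $\partial_4$ via \eqref{Eq:partial_n-definition}, invoke Lemma~\ref{Lem:partial3-type2-tuples}, and observe that the correction term vanishes because $X_i \circ (A^j a^j A^j - A^j) = 0$ in the $\Z\FIS$-module $P_3$, since the coefficient $A^j a^j A^j$ is identified with its normal form $\overline{A^j a^j A^j} = A^j$. One slip in wording: your claim that ``both $A^j a^j A^j$ and $A^j$ are already $\mathcal{R}_1$-irreducible'' is false ($A^j a^j A^j$ is a left-hand side of $\mathcal{R}_1$), but your own parenthetical immediately corrects this, and the cancellation you then carry out is exactly the paper's argument.
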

\begin{proof}
By definition of $\partial_4$, it suffices to prove that $i_3 \partial_3 \left( (X_i, A^j a^j) \circ A^j \right) = 0$. But this is immediate by Lemma~\ref{Lem:partial3-type2-tuples}, since 
\[
\partial_3 \left( (X_i, A^j a^j) \circ A^j \right) = X_i \circ (A^j a^j - 1) a^j = X_i \circ (\overline{A^j a^j A^j} - A^j) = 0,
\]
which completes the proof of the lemma.
\end{proof}
Analogously, we can prove $\partial_4(X^\star_i, a^j A^j, a^j) = (X_i^\star, a^j A^j) \circ a^j$, thus showing that our assignment of all type II and II$^\star$-cells as redundant, together with the collapsible cells in dimension $4$ corresponding to them, is consistent with $\theta$ being a collapsing scheme.

Thus, it only remains to assign the cells of type III and type III$^\star$. We do this in a manner entirely analogous to the type II and II$^\star$. Namely, for a given cell $(X_i, a^j A^k a^k)$ of type III resp.\ a cell $(X_i^\star, A^j a^k A^k)$ of type III$^\star$, respectively, where $1 \leq j < k \leq i+ j$, we set
\begin{align*}
\theta(X_i, a^j A^k a^k) &= -(X_i, a^j A^k a^k, A^k), \\
\theta(X_i^\star, A^j a^k A^k) &= -(X_i^\star, A^j a^k A^k,a^k).
\end{align*}
We now prove that this assignment is consistent with setting $(X_i, a^j A^k a^k, A^k)$ resp.\ $(X_i^\star, a^j A^j, a^j)$ to be collapsible.

\begin{lemma}\label{Lem:theta-on-typeIII-is-good}
For all $1 \leq j < k + i+j$, we have 
\[
\partial_4(X_i, a^j A^k a^k, A^k) = (X_i, a^j A^k a^k) \circ A^k,
\]
and hence $(X_i, a^j A^k a^k)$ has coefficient $-1$ in $\widetilde{\partial}_4 \theta (X_i,a^j A^k a^k) = - (X_i, a^j A^k a^k)$. 
\end{lemma}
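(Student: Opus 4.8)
The plan is to mimic exactly the structure of the proof of Lemma~\ref{Lem:theta-on-typeII-is-good}, since the type~III assignment was deliberately set up in a manner ``entirely analogous'' to type~II. By the definition of $\partial_4$ as in \eqref{Eq:partial_n-definition}, we have
\[
\partial_4(X_i, a^j A^k a^k, A^k) = (X_i, a^j A^k a^k) \circ A^k - i_3 \partial_3\left( (X_i, a^j A^k a^k) \circ A^k \right),
\]
so the entire content of the lemma reduces to showing that the correction term $i_3 \partial_3\left( (X_i, a^j A^k a^k) \circ A^k \right)$ vanishes identically. Thus the first and only substantive step is to compute $\partial_3\left( (X_i, a^j A^k a^k) \circ A^k \right)$ and show it is zero.

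To do this I would invoke Lemma~\ref{Lem:partial3-type3-tuples}, which gives $\partial_3(X_i, a^j A^k a^k) = X_i \circ (a^j A^k a^k - a^j)$. Acting on the right by $A^k$ and using $\ZM$-linearity, this becomes $X_i \circ (a^j A^k a^k A^k - a^j A^k)$, so the claim is that $a^j A^k a^k A^k$ and $a^j A^k$ have the same irreducible form. The key observation is that $a^k A^k$, when right-multiplied against the trailing $A^k$, produces the subword $A^k a^k A^k$, which is precisely a left-hand side of $\cR_1$ and rewrites to $A^k$; hence $\overline{a^j A^k a^k A^k} = \overline{a^j A^k \cdot (a^k A^k)} = a^j A^k$ after collapsing $A^k a^k A^k \to A^k$. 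Therefore $\overline{a^j A^k a^k A^k} - \overline{a^j A^k} = 0$, so the argument of $i_3$ is already $0$ and $i_3$ of it vanishes. This gives $\partial_4(X_i, a^j A^k a^k, A^k) = (X_i, a^j A^k a^k) \circ A^k$, as required; the statement about the coefficient $-1$ then follows from the sign in the definition $\theta(X_i, a^j A^k a^k) = -(X_i, a^j A^k a^k, A^k)$ combined with the augmentation under $-\otimes_{\ZM}\Z$.

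The only point requiring care is verifying that this cell $(X_i, a^j A^k a^k, A^k)$ genuinely lies in $V^{(4)}$, i.e.\ that $(a^j A^k a^k, A^k) \in \cE$: one must check that $a^j A^k a^k A^k$ is not irreducible but all its proper prefixes are. This is where the constraint $k \le i + j$ from the definition of type~III is used, and it is the analogue of the $j<i$ condition exploited in Lemma~\ref{Lem:theta-on-typeII-is-good}. I expect this bookkeeping about prefixes and the exponent inequalities to be the only genuine obstacle; the homological manipulation itself is a one-line reduction to the single rewrite $A^k a^k A^k \to A^k$. The $\star$-dual identity $\partial_4(X_i^\star, A^j a^k A^k, a^k) = (X_i^\star, A^j a^k A^k)\circ a^k$ then follows by the usual symmetry of swapping $a \leftrightarrow A$, completing the verification that $\theta$ is consistently defined on all type~III and III$^\star$ cells.
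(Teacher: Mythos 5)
Your proposal matches the paper's proof essentially verbatim: both reduce the claim to showing $i_3\partial_3\left( (X_i, a^j A^k a^k) \circ A^k \right) = 0$, invoke Lemma~\ref{Lem:partial3-type3-tuples}, and observe that $\overline{a^j A^k a^k A^k} = a^j A^k$ via the rewrite $A^k a^k A^k \to A^k$, so the argument of $i_3$ already vanishes. One negligible quibble: the inequality $k \le i+j$ is what places $(X_i, a^j A^k a^k)$ in $V^{(3)}$ (type III) to begin with, whereas membership of $(a^j A^k a^k, A^k)$ in $\cE$ only needs $j < k$; this does not affect the correctness of your argument.
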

\begin{proof}
The proof is analogous to that of Lemma~\ref{Lem:theta-on-typeII-is-good}; indeed, it suffices to prove that $i_3 \partial_3( (X_i, a^j A^k a^k) \circ A^k) = 0$. But this follows directly from Lemma~\ref{Lem:partial3-type3-tuples}, since 
\[
\partial_3( (X_i, a^j A^k a^k) \circ A^k) = X_i \circ (a^j A^k a^k - a^j) A^k = X_i (\overline{a^j A^k a^k A^k} - a^j A^k) = 0,
\]
where we use the fact that $\overline{A^k a^k A^k} = A^k$. 
\end{proof}

Analogously, one can prove that $\partial_4(X_i^\star, A^j a^k A^k, a^k) = (X_i^\star, A^j a^k A^k) \circ a^k$, which together with Lemma~\ref{Lem:theta-on-typeIII-is-good} shows that our assignment of all type III and type III$^\star$-cells as redundant, together with the choice of their images under $\theta$ to be collapsible, is consistent with the definition of $\theta$ being a collapsing scheme. 

Thus, we have assigned all cells in dimension $3$ to be either redundant or collapsible; each collapsible cell in dimension $3$ is matched with a redundant cell in dimension $2$, and each redundant cell in dimension $3$ is matched with a collapsible cell in dimension $4$. For all remaining unassigned cells $v \in V^{(n)}$ in dimensions $n \geq 4$, we define $\theta(v)= 0$ and make all such cells critical. This completes the definition of $\theta$, and along the way we have proved:

\begin{proposition}\label{Prop:theta-is-collapsing-scheme}
The map $\theta$ is a collapsing scheme on $(\widetilde{P}_\ast, \widetilde{\partial}_\ast)$, and the Morse complex $(\widetilde{P}^\theta_\ast, \widetilde{\partial}^\theta_\ast)$ has no cells in dimension $3$.  
\end{proposition}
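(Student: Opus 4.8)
The plan is to verify directly that the map $\theta$, which has now been specified on every cell, satisfies the four axioms of a collapsing scheme from \S\ref{Subsec:collapsing-schemes}, and then to read off the statement about the Morse complex from the resulting partition of the cells. Axiom~(3) (the coefficient $-1$ condition) is exactly what Lemmas~\ref{Lem:theta-on-typeI-is-good}, \ref{Lem:theta-on-typeII-is-good} and \ref{Lem:theta-on-typeIII-is-good} establish for the redundant $3$-cells of types I, II and III (and their $\star$-duals give the statement for I$^\star$, II$^\star$, III$^\star$), so the bulk of that axiom is already in hand; it remains only to check it for the single redundant $2$-cell $X_i^\star$, which follows from the $\star$-dual of Lemma~\ref{Lem:partial3-type1-tuples} after fixing the sign of $\theta(X_i^\star)$ so that $X_i^\star$ occurs in $(\widetilde{\partial}\theta)(X_i^\star)$ with coefficient $-1$. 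All remaining cells carry $\theta = 0$, so for them the condition is vacuous.

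For axiom~(1) I would observe that every cell in the image of $\theta$ has been declared collapsible and hence is sent to $0$ by $\theta$: the type~I$^\star$ cells $(X_i^\star, a^i)$ satisfy $\theta(X_i^\star, a^i) = 0$, and each dimension-$4$ image $(X_i, A^i, a^i A^i)$, $(X_i, A^j a^j, A^j)$, $(X_i, a^j A^k a^k, A^k)$ together with its $\star$-dual is assigned $\theta = 0$; therefore $\theta^2 = 0$. For axiom~(2) I would note that the defining formulas send distinct redundant cells to distinct collapsible cells: the first entry $X_i$ versus $X_i^\star$ separates cells beginning in $a$ from those beginning in $A$, the second entry separates the three types, and the parameters determine the cell uniquely within each type. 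One checks in passing that every such image is a genuine basis element, e.g.\ that $(A^i, a^i A^i) \in \cE$ for type~I.

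The main obstacle is axiom~(4): the relation $\succ$ must be shown to be well-founded and locally finite, and since each $V^{(n)}$ is infinite the naive idea of using total word length as a strictly decreasing measure fails, because $(X_i, A^i)$ and the cell $(X_i^\star, a^i)$ it covers have the same length. Instead I would compute $(\widetilde{\partial}\theta + \theta\widetilde{\partial})(b)$ explicitly on each cell type from the formulas of Lemmas~\ref{Lem:partial3-type1-tuples}--\ref{Lem:theta-on-typeIII-is-good}, using the fact that under $-\otimes_{\Z\FIS}\Z$ every element of $\FIS$ acts as $1$. For the type~II, II$^\star$, III, III$^\star$ cells, and for all collapsible $3$- and $4$-cells, this reduces to $\pm b$ itself, so these cells are $\succ$-minimal; the only nontrivial covering relations are $(X_i, A^i) \succ (X_i^\star, a^i)$ in dimension $3$ and $X_i^\star \succ X_i$ in dimension $2$, with the covered cell minimal in each case. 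In dimension $\geq 5$ all cells are critical and $\theta\widetilde{\partial} = 0$, so $\succ$ is empty, while in dimension $4$ a critical cell covers only the (minimal, and finitely many) collapsible cells appearing in $\theta\widetilde{\partial}$. Thus every $\succ$-chain has length at most $1$, giving well-foundedness and local finiteness simultaneously.

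Finally, once $\theta$ is known to be a collapsing scheme, the statement about the Morse complex is immediate: by construction every cell of $V^{(3)}$ is either redundant (types I, II, II$^\star$, III, III$^\star$) or collapsible (type I$^\star$), so there are no critical $3$-cells and hence $\widetilde{P}^\theta_3 = 0$.
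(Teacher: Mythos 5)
Your proof is correct and is essentially the paper's own: the paper establishes this proposition simply by constructing $\theta$ and verifying axiom (3) through Lemmas~\ref{Lem:theta-on-typeI-is-good}, \ref{Lem:theta-on-typeII-is-good} and \ref{Lem:theta-on-typeIII-is-good}, leaving axioms (1), (2) and (4) as implicit checks, which you have carried out correctly --- in particular, your observation that every $\succ$-chain has length at most $1$ (type II/III and all collapsible cells being $\succ$-minimal, with the only nontrivial coverings $(X_i, A^i) \succ (X_i^\star, a^i)$, $X_i^\star \succ X_i$, and critical $4$-cells covering collapsible ones) is exactly the right substitute for the failed word-length argument. Your sign adjustment for $\theta(X_i^\star)$ is also well spotted and is a genuine, harmless correction: with the paper's choice $\theta(X_i^\star) = (X_i^\star, a^i)$, the $\star$-dual of Lemma~\ref{Lem:partial3-type1-tuples} gives $(\widetilde{\partial}_3\theta)(X_i^\star) = X_i^\star - X_i$ after tensoring, so $X_i^\star$ occurs with coefficient $+1$ rather than $-1$, and one must either flip that sign as you do or read axiom (3) up to units.
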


Thus by Proposition~\ref{Prop:Forman'sTheorem} we have that $H_\ast(\widetilde{P}^\theta_\ast) \cong H_\ast(\widetilde{P}_\ast) = H_\ast(\FIS, \Z)$, and since there are no critical cells in dimension $3$, there is simple chain homotopy between $(\widetilde{P}_\ast, \widetilde{\partial}_\ast)$ and truncating the complex in dimension $2$. Hence we have $H_k(\FIS, \Z) = 0$ for $k \geq 3$. We have thus completely determined the integral homology of the \textit{monogenic} free regular $\star$-monoid. We now extend this to all finitely generated free regular $\star$-monoids. To do this, we need a simple lemma. 

\begin{lemma}\label{Lem:free-products-good}
Let $X = X_0 \sqcup X_1$. Then $\Z\FSX(X)$ is free as a right $\Z\FSX(X_i)$-module.
\end{lemma}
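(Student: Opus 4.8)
The plan is to produce an explicit $\Z N$-basis of $\Z M$, where I abbreviate $M = \FSX(X)$ and $N = \FSX(X_i)$. Using Pol\'ak's complete system $\mathbf{R}^\star(X)$ (Proposition~\ref{Prop:Polak-prop}) I identify $M$ with its irreducible words; since a word over $X_i \cup (X_i)_\star$ can only contain a forbidden factor $ww^\star w$ with $w$ over the same letters, the irreducible words using only $X_i$-letters are exactly the irreducible words of $N$, so $N$ sits inside $M$ with compatible normal forms. The obvious candidate basis is the transversal $T$ of all irreducible words that are empty or do not end in a letter of $X_i$: every irreducible word $m$ splits uniquely, as a word, as $m = t\,u$, where $u$ is its maximal $X_i$-suffix and $t \in T$. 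As $u \in N$ and $m = t\cdot u$ in $M$ (the concatenation being already irreducible), this shows that $T$ generates $\Z M$ as a right $\Z N$-module.

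The step I expect to be the main obstacle is proving that $T$ is actually \emph{free}, and here the naive picture breaks down: right multiplication by $N$ need not respect the decomposition above, because appending an $X_i$-word can trigger a reduction $ww^\star w \to w$ that straddles the boundary between $t$ and the appended suffix. For instance, with $X_i = \{a\}$ and $b \in X \setminus X_i$, the words $b$ and $baa^\star b^\star b$ are both irreducible and both lie in $T$, yet $b \cdot a = baa^\star b^\star b \cdot a = ba$ in $M$ (via $ba\,(ba)^\star\,ba = ba$). Thus distinct transversal elements collide after right multiplication; neither is right-divisible by a non-trivial element of $N$, so both are forced into any transversal, and in fact $M$ is not a free right $N$-set at all. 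Consequently no monomial transversal can serve as a basis, and a genuine $\Z N$-basis must involve non-trivial $\Z$-combinations.

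To resolve this I would show that all such boundary collisions are coherent consequences of the fundamental relations $w w^\star w = w$. The guiding observation is that the collision above is already absorbed inside $\Z M$, since
\[
baa^\star b^\star b - b = \bigl(b - baa^\star b^\star b\bigr)\bigl(aa^\star - 1\bigr),
\]
so it lies in the right submodule generated by the relator $aa^\star - 1$ coming from $a a^\star a = a$. Concretely, I would organize the crossing reductions into a rewriting procedure on the free right $\Z N$-module $\bigoplus_{t \in T}\Z N$, well-founded by word length, and argue that the irreducibles of this procedure furnish a corrected free $\Z N$-basis that is carried isomorphically onto $\Z M$. The technical heart — and where I expect the real work to lie — is a confluence statement: every overlap between a boundary reduction and the right $N$-action must resolve to the same element, which is precisely the point at which the completeness of $\mathbf{R}^\star(X)$ and the $\star$-structure have to be invoked.
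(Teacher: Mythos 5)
Your negative analysis is correct, and it is the most valuable part of your proposal: it does not merely kill the naive transversal, it refutes the paper's own proof. The paper takes exactly the monomial route you rule out. Writing $M = \FSX(X)$ and $N = \FSX(X_i)$, the paper's proof claims that the set $B$ of irreducible words $b$ ending in a letter outside $X_i \cup (X_i)_\star$, and such that $bw$ stays irreducible for every irreducible $w$ over $X_i \cup (X_i)_\star$, is a $\Z N$-basis of $\Z M$. Your pair $b$ and $v = baa^\star b^\star b$ shows that this $B$ cannot even generate: $v \notin B$, because $va$ reduces via $(ba)(ba)^\star(ba) \to ba$; yet $v \notin B \cdot \Z N$, because the defining condition on $B$ forces every product $b'w'$ with $b' \in B$ and $w'$ irreducible over $X_i$ to be an irreducible concatenation, hence either an element of $B$ itself (when $w' = 1$) or a word ending in an $X_i$-letter, and $v$ is neither. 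The paper's generation argument rewrites the product $\overline{bu}$ for an offending $u$, which says nothing about how to express $b$ itself --- there is no cancellation available --- so that induction is a non sequitur, and your free-$N$-set observation ($b$ and $v$ admit no proper right $N$-divisor, hence must both occur in any monomial generating family, yet $b \cdot a = v \cdot a$) shows the defect cannot be repaired by any cleverer choice of words. On this point you are right and the paper is wrong.

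However, your own proposal does not prove the lemma either, and the gap is not a formality. Everything is deferred to the closing ``confluence statement,'' which you leave open; and the plan surrounding it is underdetermined in a way that matters. A rewriting procedure on $\bigoplus_{t \in T} \Z N$ naturally yields a $\Z$-basis of normal forms for its quotient, not a $\Z N$-basis; freeness over $\Z N$ is an additional structural input, not an output of confluence. Moreover, the obvious ``corrected'' elements cannot be basis elements: your own identity gives $(v - b)\cdot aa^\star = 0$, and a basis element of a free module has trivial annihilator. What your collision analysis actually sets up is the following. Filter $\Z M$ by the length of the prefix $t$ preceding the maximal $X_i$-suffix; straddling reductions strictly shorten $t$, so the filtration quotients are $\bigoplus_t \Z N / K_t$, where $K_t$ is the right ideal spanned by the irreducible words $n$ over $X_i$ with $tn$ reducible (for your example, $K_v = a\Z N$). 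The lemma would follow if each $\Z N/K_t$ were projective, for then the filtration splits step by step and an Eilenberg-swindle argument against the infinitely many $t$ with $K_t = 0$ upgrades the resulting projective decomposition to a free one. For $K_v = a\Z N$ this projectivity does hold: $1 - aa^\star$ is idempotent and kills $a\Z N$, so $\Z N = a\Z N \oplus (1-aa^\star)\Z N$ --- which is precisely your observation about the relator $aa^\star - 1$, put to its correct use. But establishing this splitting for an arbitrary $t$, whose ideal $K_t$ is governed by all possible straddling overlaps, is exactly the hard step, and neither your sketch nor the paper's argument contains it. As matters stand, the lemma should be regarded as unproven, and since Theorem~\ref{Thm:main-homology-theorem} for rank $r \geq 2$ and Corollary~\ref{Cor:Fp2} rest on it, repairing this proof is not optional.
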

\begin{proof}
We give an explicit basis for $\Z\FSX(X)$ as a right $\Z\FSX(X_1)$-module, which is sufficient for establishing the lemma. Let $B$ consist of all those $\mathbf{R}^\star(X)$-irreducible words $b$ over $X \cup X^\star$ such that (1) $b$ ends in a letter from $X_2 \cup X_2^\star$; and (2) $bw$ is irreducible for all words $w$ over $X_1 \cup X_1^\star$. By Proposition~\ref{Prop:Polak-prop}, we can identify $\mathbf{R}^\star(X)$-irreducible words with the element they represent in $\FSX(X)$, so we can speak of $B$ as a set of elements of $\FSX(X)$. We claim that $B$ is a basis for $\Z\FSX(X)$ as a right $\Z\FSX(X_1)$-module. 

First, we must show that $B$ is a generating set, for which it suffices to show that any element of $\FSX(X)$ can be written in the form $b w$ for some $b \in B$ and $w \in \FSX(X_1)$. Let $x \in \FSX(X)$ be any element. Since $\FSX(X)$ is the free product (in the category of regular $\star$-monoids) of $\FSX(X_1)$ and $\FSX(X_2)$, we can write $x = bw$ where $w \in \FSX(X_1)$ and $b$ are irreducible, and $b$ ends in a letter from $X_2 \cup X_2^\star$. Thus $b$ satisfies condition (1). If this $b$ does not satisfy condition (2), say $bw$ is reducible for some $w \in \FSX(X_1)$, then since all rules $(u \to v)$ of $\mathbf{R}^\star(X)$ end satisfy the condition that $u$ and $v$ end in the same letter, it follows that the unique irreducible descendant $y$ of $bw$ can be written of the form $b' w'$ for some $b'$ satisfying condition (1) and $w' \in \FSX(X_1)$. Furthermore, since $\mathbf{R}^\star(X)$ is length-reducing, and $b$ and $w$ are both irreducible, it follows that $|b'| < |b|$. If now $b'$ does not satisfy condition (2), then we can iterate the procedure, and by induction on the length of the word this eventually terminates in a word $b''$ satisfying condition (1) and (2). Thus we have proved that $B$ is a generating set. 

To prove uniqueness, note that any suffix of an element in $B$ is also an element of $B$. Hence, if $bw = b' w'$ for some $b, b' \in B$ and $w, w' \in \FSX(X_1)$, then by completeness of $\mathbf{R}^\star(X)$ either we have $bw$ graphically equal to $b'w'$, from which we immediately have $b = b'$ and $w = w'$, or else at least one of $bw$ and $b'w'$ is reducible; suppose, without loss of generality, that $bw$ is reducible. Then since $b$ and $w$ are irreducible, there must be some suffix $b''$ of $b$ and a prefix $w''$ of $w$ such that $b''w''$ is reducible; but $b'' \in B$, being a suffix of $b$, and clearly $w'' \in \FSX(X_1)$, contradicting property (2) of $b''$. Thus the decomposition is unique, and $B$ is a basis for $\Z\FSX(X)$ as a $\Z\FSX(X_1)$-module. 
\end{proof}

Using this, we can now obtain the main theorem of this section:

\begin{theorem}\label{Thm:main-homology-theorem}
Let $r \geq 1$, and let $\FrS$ be the free regular $\star$-monoid of rank $r$. Then:
\[
H_k(\FrS,\Z) \cong H^k(\FrS, \Z) \cong \begin{cases*}
\Z^r & if $k=1$, \\
\Z^\infty & if $k=2$, \\
0 & if $k \geq 3$.
\end{cases*}
\]
\end{theorem}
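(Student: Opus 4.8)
The plan is to realise $\FrS$ as a free product (the coproduct in the category of regular $\star$-monoids) and to reduce the computation to the monogenic case already settled in Propositions~\ref{Prop:H2-computation} and~\ref{Prop:theta-is-collapsing-scheme}. Concretely, partition the generating set $X$ with $|X|=r$ as $X = X_1 \sqcup X_2$ with $|X_1|=1$, and write $M = \FSX(X)$, $M_1 = \FSX(X_1) \cong \FIS$, and $M_2 = \FSX(X_2) \cong \mathbf{F}_{r-1}^\star$, so that $M = M_1 \amalg M_2$. I would then prove, by induction on $r$, the reduced K\"unneth-type formula
\[
H_n(\FrS, \Z) \cong H_n(M_1, \Z) \oplus H_n(M_2, \Z) \qquad (n \geq 1),
\]
with base case $r=1$ supplied by the monogenic computation: there $H_1(\FIS,\Z) \cong \Z$ (read off from $\widetilde{\partial}_1 = 0$ and $\im \widetilde{\partial}_2 = \Z\cdot((a)+(A))$ via Lemma~\ref{Lem:partial-2-equation}), $H_2(\FIS,\Z) \cong \Z^\infty$, and $H_k(\FIS,\Z) = 0$ for $k \geq 3$. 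Feeding the formula the values $\Z \oplus \Z^{r-1} = \Z^r$, $\ \Z^\infty \oplus \Z^\infty = \Z^\infty$, and $0 \oplus 0 = 0$ then yields the theorem for homology.

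The engine of the inductive step is base change along $\Z M_i \hookrightarrow \Z M$, for which Lemma~\ref{Lem:free-products-good} is precisely the needed input. Since $\Z M$ is \emph{free}, hence flat, as a right $\Z M_i$-module, for any left $\Z M_i$-module $N$ and any free $\Z M_i$-resolution $Q_\ast \to N$ the complex $\Z M \otimes_{\Z M_i} Q_\ast$ is a free $\Z M$-resolution of $\Z M \otimes_{\Z M_i} N$; applying $\Z \otimes_{\Z M} -$ and using $\Z \otimes_{\Z M}(\Z M \otimes_{\Z M_i} Q_\ast) = \Z \otimes_{\Z M_i} Q_\ast$ gives the Shapiro-type isomorphism
\[
\Tor^{\Z M}_n(\Z, \Z M \otimes_{\Z M_i} N) \cong \Tor^{\Z M_i}_n(\Z, N).
\]
The second ingredient is a splitting of the augmentation ideal $I_M = \ker(\Z M \xrightarrow{\varepsilon} \Z)$ as a left $\Z M$-module,
\[
I_M \cong \big(\Z M \otimes_{\Z M_1} I_{M_1}\big) \oplus \big(\Z M \otimes_{\Z M_2} I_{M_2}\big),
\]
assembled from the multiplication maps $m \otimes x \mapsto mx$. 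Granting this, the dimension-shift $H_n(M,\Z) \cong \Tor^{\Z M}_{n-1}(\Z, I_M)$ coming from $0 \to I_M \to \Z M \to \Z \to 0$ (together with $\Tor^{\Z M}_{\geq 1}(\Z,\Z M)=0$), followed by the display above and the matching shift $\Tor^{\Z M_i}_{n-1}(\Z, I_{M_i}) \cong H_n(M_i,\Z)$ over each factor, chains together to give the direct-sum formula for all $n \geq 1$.

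The main obstacle is establishing the augmentation-ideal splitting for the regular $\star$-monoid coproduct; this is the monoid analogue of the classical free-product decomposition of group rings, and it is where the combinatorics genuinely enter. Surjectivity of the assembly map follows by telescoping $m - 1 = \sum_j (s_1\cdots s_{j-1})(s_j-1)$ along the alternating syllable normal form $m = s_1 \cdots s_k$ of an element of the coproduct, each factor $s_j - 1$ lying in $I_{M_1}$ or $I_{M_2}$. Injectivity is the delicate part: here I would lean on the explicit free basis of $\Z M$ over $\Z M_i$ produced in the proof of Lemma~\ref{Lem:free-products-good}, which refines the normal form sufficiently to separate the contributions of the two factors and to guarantee that distinct normal forms cannot cancel. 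Once the splitting is in place, the remainder is formal homological algebra.

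Finally, the cohomology statement follows from the homology computation by the Universal Coefficient Theorem: the complex $\widetilde{P}_\ast$ consists of free abelian groups and every $H_k(\FrS,\Z)$ is free abelian, so the $\Ext^1_\Z$-correction terms vanish and the theorem reduces the cohomology in each degree to $\Hom_\Z(H_k(\FrS,\Z),\Z)$, completing the computation.
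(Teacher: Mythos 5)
Your proposal is correct and is essentially the paper's proof: the same decomposition of $\FrS$ as a coproduct (in the category of regular $\star$-monoids) of lower-rank free regular $\star$-monoids, the same base case supplied by Propositions~\ref{Prop:H2-computation} and~\ref{Prop:theta-is-collapsing-scheme}, Lemma~\ref{Lem:free-products-good} as the freeness/flatness input, induction on the number of factors, and the universal coefficient theorem for cohomology. The only divergence is in the homological bookkeeping: the paper tensors resolutions of the two factors up to $\ZM$ and splices them into a single free $\ZM$-resolution of $\Z$ using the short exact sequence $0 \to \ZM \to (\Z \otimes_{\ZM_1} \ZM) \oplus (\Z \otimes_{\ZM_2} \ZM) \to \Z \to 0$ together with a Nine Lemma argument, whereas you dimension-shift through the augmentation ideal and apply a Shapiro-type base-change isomorphism. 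These two mechanisms are equivalent: by a snake-lemma argument, exactness of the paper's sequence is precisely your splitting of the augmentation ideal of $\ZM$ into the two induced summands, so the single non-formal ingredient is the same in both arguments. Be aware, though, that your injectivity sketch needs more than you indicate: the basis from Lemma~\ref{Lem:free-products-good} does show that each assembly map $\ZM \otimes_{\ZM_i} I_{M_i} \to \ZM$ is injective, but the claim that the two images intersect trivially requires a further argument (e.g.\ an induction on the alternating block structure of supports of elements), and it cannot simply be borrowed from the theory of ordinary monoid free products, since the regular $\star$-coproduct is a proper quotient of the monoid free product. This is, however, exactly the point the paper itself leaves unproved (it asserts its exact sequence and cites Polyakova for the rest), so your treatment is, if anything, the more explicit of the two; your direct reading of $H_1(\FIS,\Z) \cong \Z$ off the complex, in place of the paper's maximal-group-image observation, is also correct.
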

\begin{proof}
First, note that we have already proved the theorem for $r=1$ via Proposition~\ref{Prop:H2-computation} (which gives the second homology group) and Proposition~\ref{Prop:theta-is-collapsing-scheme} (which shows that the higher homology groups vanish). Also, for any monoid $M$, it is not difficult to show that $H_1(M, \Z) \cong H_1(G, \Z)$, where $G$ is the maximal group image of $G$. Since the maximal group image of $\FrS$ is clearly the free group on $r$ generators, we have $H_1(\FrS, \Z) \cong \Z^r$.

Thus, it remains to prove the theorem for the higher homology groups of $\FrS$ when $r>1$. Let $M_1, M_2$ be two copies of $\FIS$, let $M = M_1 \ast M_2 \cong \mathbf{F}_2^\star$, and let $(P^{(i)}_\ast, \partial^{(i)}_\ast)$ be a free $\ZM_i$-resolution of $\Z$ for $i=1,2$. By Lemma~\ref{Lem:free-products-good}, the functor $- \otimes_{\ZM_i} \ZM$ is exact, and hence $P^{(i)}_j \otimes_{\ZM_i} \ZM$ is a free $\ZM$-module for all $j \in \N$ and $i = 1, 2$. Note that there is a short exact sequence 
\[
0 \longrightarrow \ZM \xlongrightarrow{i} (\Z \otimes_{\ZM_1} \ZM) \oplus (\Z \otimes_{\Z M_2} \ZM) \xlongrightarrow{p} \Z \longrightarrow 0
\]
with $i$ given by the natural inclusion $m \mapsto (1 \otimes_{\ZM_1} m) \oplus (1 \otimes_{\ZM_2} m)$ and $p$ defined by $p(1 \otimes_{\ZM_1} m) = 1$ and $p(1 \otimes_{\ZM_2} m) = -1$. Thus, defining 
\[
Q_j =  (P^{(1)}_j \otimes_{\ZM_1} \ZM) \oplus (P^{(2)}_j \otimes_{\ZM_2} \ZM)
\]
for $j \geq 1$, and letting $\widetilde{\partial}_j = (\partial_j^{(1)} \otimes 1, \partial_j^{(2)} \otimes 1)$, a standard argument via the Nine Lemma (cf.\ e.g.\ \cite[Theorem~2.3]{Polyakova2007}) shows that there is an exact sequence
\[
\cdots \xlongrightarrow{\widetilde{\partial}_3} Q_2 \xlongrightarrow{\widetilde{\partial}_2} Q_1 \longrightarrow \ZM \longrightarrow \Z \longrightarrow 0 
\]
and since the $Q_i$ are all free (since $- \otimes_{\ZM_i} \ZM$ is exact), this is a free $\ZM$-resolution of $\Z$. Furthermore, it is clear from the description of $\widetilde{\partial}_{j}$ that $\Tor_{k}^{\ZM}(\Z, \Z) \cong \Tor_k^{\ZM_1}(\Z, Z) \oplus \Tor^{\ZM_2}_k(\Z, \Z)$ for $k \geq 2$. Thus, proceeding by induction on the number of free factors in $\FrS$, we find that $H_2(\FrS, \Z) \cong \Z^\infty$ and $H_k(\FrS, \Z) = 0$ for all $k \geq 3$. Finally, by the universal coefficient theorem it follows that $H_k(\FrS, \Z) \cong H^k(\FrS, \Z)$ for all $k \geq 1$, since all the homology groups of $\FrS$ are free. \end{proof}

Since $H_2(\FrS, \Z)$ is not finitely generated, we have the following consequence:

\begin{corollary}\label{Cor:Fp2}
No free regular $\star$-monoid is $\FP_2$. 
\end{corollary}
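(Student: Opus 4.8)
The plan is to deduce the corollary as an essentially immediate consequence of Theorem~\ref{Thm:main-homology-theorem}, using the standard characterization of the homological finiteness property $\FP_2$ in terms of the finite generation of homology groups. First I would recall the key fact: if a monoid $M$ is of type $\FP_2$, then $H_2(M, \Z) = \Tor_2^{\ZM}(\Z, \Z)$ is a finitely generated abelian group. Indeed, $\FP_2$ guarantees a free $\ZM$-resolution $(P_\ast, \partial_\ast)$ of $\Z$ in which $P_0, P_1, P_2$ are all finitely generated; applying $- \otimes_{\ZM} \Z$ yields a complex of finitely generated free abelian groups in degrees $0, 1, 2$, and $H_2$ is a subquotient of the finitely generated group $P_2 \otimes_{\ZM} \Z$, hence itself finitely generated (as $\Z$ is Noetherian).

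Second, I would invoke Theorem~\ref{Thm:main-homology-theorem}, which computes $H_2(\FrS, \Z) \cong \Z^\infty$ for every $r \geq 1$. Since $\Z^\infty$ denotes the free abelian group of countably infinite rank, it is manifestly \emph{not} finitely generated. The contrapositive of the fact recalled above then immediately gives that $\FrS$ is not of type $\FP_2$, for any $r \geq 1$. This is exactly the statement of the corollary, since ``no free regular $\star$-monoid'' ranges over precisely the monoids $\FrS$ for $r \geq 1$ (the case of infinite rank following a fortiori, as it surjects onto, and contains retracts isomorphic to, finite-rank free regular $\star$-monoids via the ideal/Rees quotient constructions already used in the excerpt).

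I do not anticipate any genuine obstacle here: the entire weight of the argument has already been carried by the computation $H_2(\FrS, \Z) \cong \Z^\infty$ in Theorem~\ref{Thm:main-homology-theorem}. The only point requiring a sentence of care is the clean statement of the homological algebra fact that $\FP_2$ forces $H_2(-, \Z)$ to be finitely generated; this is standard (see, e.g., the characterization of $\FP_n$ in terms of finite generation of the $\ZM$-resolution), and because $\Z$ is a Noetherian ring, finite generation passes to the homology of the tensored complex. Thus the proof is a one-line deduction, and I would write it simply as: by Theorem~\ref{Thm:main-homology-theorem} we have $H_2(\FrS, \Z) \cong \Z^\infty$, which is not finitely generated, so $\FrS$ cannot be of type $\FP_2$.
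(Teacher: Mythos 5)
Your proof is correct and takes essentially the same approach as the paper: the corollary is deduced immediately from Theorem~\ref{Thm:main-homology-theorem}, since $H_2(\FrS, \Z) \cong \Z^\infty$ is not finitely generated, while type $\FP_2$ forces $H_2(-,\Z)$ to be a finitely generated abelian group. The only difference is that you spell out the standard homological-algebra fact (and the infinite-rank case) explicitly, which the paper leaves implicit, mentioning only as a remark an alternative route via retraction onto $\FIS$ and Pride's theorem that $\FP_n$ is inherited by retracts.
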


This corollary can also be obtained from Proposition~\ref{Prop:H2-computation} by noting that $\FrS$ retracts onto $\FIS$, and Pride \cite[Theorem~3]{Pride2006} has proved that the class of (left) $\FP_n$-monoids is closed under retractions for all $n \geq 1$. We remark that Gray \& Steinberg \cite{Gray2021} recently proved that no free inverse monoid of rank $r \geq 1$ is $\FP_2$, using rather different methods. Thus our Corollary~\ref{Cor:Fp2} is the regular $\star$-analogue of their result. Their result strengthened a classical result of Schein \cite{Schein1975} that free inverse monoids are not finitely presented as monoids. Jung-won Cho (personal communication) has shown that Schein's proof can be directly adapted to prove that $\FrS$ is not finitely presented; the above Corollary~\ref{Cor:Fp2} thus also implies this result (a proof via the rewriting system $\mathbf{R}^\star(X)$ can also be given). We remark finally that we do not know the integral homology groups of free inverse monoids, but that it would be interesting to know them. We suspect that they are all zero in dimension $2$ and higher.

Another consequence of Theorem~\ref{Thm:main-homology-theorem} is that if $A$ is any trivial $\Z\FrS$-module, then for all $k \geq 3$ we have
\[
H^k(\FrS, A) \cong \underbrace{\Hom(H_k(\FrS, \Z), A)}_{=0} \oplus \underbrace{\Ext^\Z_1(H_{k-1}(\FrS, \Z), A)}_{=0 \text{ since } \Ext^\Z_1(\Z, A) = 0} = 0
\]
by the universal coefficient theorem. Recall that the \textit{trivial cohomological dimension} $\operatorname{tcd}(M)$ of a monoid $M$ is the least integer $k$ such that $H^q(M,A) = 0$ for all $q > k$ and all trivial $\ZM$-modules $A$. Thus one corollary to Theorem~\ref{Thm:main-homology-theorem} is:

\begin{corollary}
The free regular $\star$-monoid $\FrS$ has trivial cohomological dimension $2$. 
\end{corollary}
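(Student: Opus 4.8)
The plan is to deduce this corollary directly from the homology computation in Theorem~\ref{Thm:main-homology-theorem} together with the universal coefficient theorem, which is essentially the argument already sketched in the paragraph immediately preceding the statement. First I would establish the upper bound $\operatorname{tcd}(\FrS) \leq 2$. For any trivial $\Z\FrS$-module $A$ and any $q \geq 3$, the universal coefficient theorem gives a short exact sequence expressing $H^q(\FrS, A)$ in terms of $\Hom(H_q(\FrS,\Z), A)$ and $\Ext^1_\Z(H_{q-1}(\FrS,\Z), A)$. By Theorem~\ref{Thm:main-homology-theorem} we have $H_q(\FrS,\Z) = 0$ for $q \geq 3$, so the $\Hom$-term vanishes; and for the $\Ext$-term, when $q = 3$ we use $H_2(\FrS,\Z) \cong \Z^\infty$ together with the fact that $\Ext^1_\Z$ of any free abelian group into $A$ vanishes (since free abelian groups are projective $\Z$-modules), while for $q \geq 4$ the relevant lower homology group $H_{q-1}(\FrS,\Z)$ is already zero. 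Hence $H^q(\FrS, A) = 0$ for all $q \geq 3$ and all trivial $A$, giving $\operatorname{tcd}(\FrS) \leq 2$.

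Next I would establish the matching lower bound $\operatorname{tcd}(\FrS) \geq 2$, i.e.\ exhibit a trivial module $A$ for which $H^2(\FrS, A) \neq 0$. The natural choice is $A = \Z$ itself. Again by the universal coefficient theorem, $H^2(\FrS, \Z)$ surjects onto $\Hom(H_2(\FrS,\Z), \Z) = \Hom(\Z^\infty, \Z)$, which is certainly nonzero; alternatively one simply invokes the isomorphism $H^2(\FrS,\Z) \cong H_2(\FrS,\Z) \cong \Z^\infty$ already recorded in Theorem~\ref{Thm:main-homology-theorem}. Either way $H^2(\FrS,\Z) \neq 0$, so $2$ is not an admissible bound and therefore $\operatorname{tcd}(\FrS) \geq 2$. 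Combining the two bounds yields $\operatorname{tcd}(\FrS) = 2$ exactly.

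There is no genuine obstacle here, as this is a formal consequence of the already-completed homology computation; the only point requiring a moment of care is the vanishing of the $\Ext^1_\Z$-term at $q = 3$, where one must note that it is precisely the freeness (equivalently, projectivity over $\Z$) of $H_2(\FrS,\Z) \cong \Z^\infty$ that forces $\Ext^1_\Z(\Z^\infty, A) = 0$ for every abelian group $A$. This is exactly the freeness observation used at the end of the proof of Theorem~\ref{Thm:main-homology-theorem} to invoke the universal coefficient theorem, so the ingredient is already available. The entire argument can therefore be written in a few lines.
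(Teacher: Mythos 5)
Your proposal is correct and follows essentially the same route as the paper: the upper bound $\operatorname{tcd}(\FrS) \leq 2$ is exactly the universal coefficient argument given in the paragraph preceding the corollary (vanishing of the $\Hom$-term since $H_q(\FrS,\Z)=0$ for $q\geq 3$, and vanishing of the $\Ext^1_\Z$-term since $H_2(\FrS,\Z)\cong\Z^\infty$ is free abelian, hence projective over $\Z$). Your explicit verification of the lower bound via $H^2(\FrS,\Z)\cong\Z^\infty\neq 0$ is a point the paper leaves implicit in Theorem~\ref{Thm:main-homology-theorem}, but it is the same observation, not a different method.
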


The collapsing scheme constructed for the integral homology of $\FrS$ does not seem to extend to all other coefficients. It would therefore be interesting to know the cohomological dimension of $\FrS$, and whether or not it is also $2$. 

\section*{Acknowledgements}

I would like to thank Jung-won Cho, James East, Ganna Kudryavtseva, and Nik Ru\v{s}kuc for many helpful discussions about free regular $\star$-monoids and many adjacent topics.

\bibliography{freestar1-rewritten.bib}
\bibliographystyle{plain}
 
 \end{document}